\documentclass[11pt]{article}

\usepackage{latexsym}
\usepackage{amssymb}
\usepackage{amsfonts}
\usepackage{amsfonts}
\usepackage{amsmath}
\usepackage{amsthm}
\usepackage{amscd}
\usepackage{graphicx,float}
\usepackage{tikz}

\newtheorem{thm}{Theorem}

\newtheorem{lem}[thm]{Lemma}
\newtheorem{prop}[thm]{Proposition}
\theoremstyle{definition}
\newtheorem{defn}[thm]{Definition}
\newtheorem{rem}[thm]{Remark}

\newcommand{\Rd}{\mathbb{R}^{d}}

\newcommand{\Rdd}{\mathbb{R}^{2d}}
\def\R2d{{\mathbb R^{2d}}}
\newcommand\Wig{\mathop{\rm Wig}}
\def\supp{{\mathop{\rm supp\,}}}
\newcommand{\afrac}[2]{\genfrac{}{}{0pt}{1}{#1}{#2}}

\begin{document}
\title{\Large\bf Cohen class of time-frequency representations and operators: boundedness and uncertainty principles}
\author{{\large Paolo Boggiatto\footnote{e-mail: paolo.boggiatto@unito.it} , Evanthia Carypis\footnote{e-mail:
evanthia.carypis@unito.it} \ and Alessandro Oliaro\footnote{e-mail: alessandro.oliaro@unito.it}} \\
\it Department of Mathematics, University of Torino,
Italy}
\date{}

\maketitle

\begin{abstract}
This paper presents a proof of an uncertainty principle of
Donoho-Stark type involving $\varepsilon$-concentration of
localization operators. More general operators associated with
time-frequency representations in the Cohen class are then
considered. For these operators, which include all usual quantizations, we prove a boundedness result in the $L^p$
functional setting and a form of uncertainty principle analogous to that for localization operators.

\vskip.2cm
\noindent Keywords: Uncertainty Principles; Time-Frequency Representations; Pseudo-differential Operators.
\end{abstract}

\section{Introduction}
Uncertainty principles (UP) appear in harmonic analysis and signal
theory in a variety of different forms involving not only the couple
$(f,\widehat f)$ formed by a signal (function or distribution) and
its Fourier transform, but essentially every representation of a
signal in the time-frequency space. Among the wide literature on this topic we refer for example to \cite{Ben85, BogCarOli2012, BogCarOli2014, BogFerGal, Coh95, DonSta, FerGal10, FolSit97, Gro2003, Jan85, Jan98}.

In this paper we consider the case where the couple $(f,\widehat f)$
is substituted by a couple $(T_1f,T_2f)$, where $T_1, T_2$ are
operators by which, in some sense, the concentration of the signal
$f$ is ``tested''. The consequent uncertainty statement is then of
the following type: if the tests yield functions which are
sufficiently concentrated on some domains of the time-frequency
space, then the Lebesgue measure of these domains can not be ``too
small''.

We make now precise the type of operators that are used, in which
sense ``concentration'' is intended, and what is meant by ``too
small''.

The class of operators that we consider is strictly connected with
the \emph{Cohen class} of time-frequency representations, which
consists of sesquilinear forms of the type
\begin{equation}\label{Cohen}
Q_{\sigma}(f,g)(x,\omega) = \sigma\ast \Wig(f,g)(x,\omega),
\end{equation}
where
\begin{equation}\label{wig} \Wig(f,g)(x,\omega) =
\int_{\Rd}{e^{-2\pi i \omega\cdot
t}f\left(x+t/2\right)\overline{g\left(x-t/2\right)}\, dt}
\end{equation}
is the \emph{Wigner transform} and $\sigma$ is the \emph{Cohen
kernel}.  We shall shortly write $Q_\sigma (f)$ for the quadratic form $Q_\sigma(f,f)$.
Clearly the signals $f,g$ must be chosen in functional or
distributional spaces such that the convolution \eqref{Cohen} makes
sense.

The Cohen class finds its justification in applied signal
analysis as it actually coincides with
the class of quadratic {\it covariant} time-frequency representations.
More precisely, let $Q$ be any sesquilinear form (non a priori in
the Cohen class); a very natural
requirement is that a translation in time $\tau_af(x)=f(x-a)$ of the
signal should reflect into the same translation of its representation
along the time-axis, i.e. $Q(\tau_a f)(x,\omega)=Qf(x-a,\omega)$. On
the other hand a modulation $\mu_b f(x)=e^{2\pi ibx}f(x)$ should
reflect into a translation by the same parameter $b$ along the
frequency-axis, i.e. $Q(\mu_b f)(x,\omega)=Qf(x,\omega-b)$. It can be
proved that these two requirements, called {\it
covariance property}, actually characterize, under some minor technical
hypothesis, the Cohen class among all quadratic
representations (see e.g. \cite{Gro01-1}, Thm 4.5.1).

As described in \cite{BogDedOli2009}, \cite{BogDedOli2010}, we can
associate an operator $T^a_\sigma$, depending on a symbol $a$, with
each time-frequency representation $Q_\sigma$, by the formula:
\begin{equation}\label{QT}
(T_{\sigma}^{a}f,g) = (a, Q_{\sigma}(g,f)).
\end{equation}
Formula \eqref{QT} can be understood, e.g. in the Lebesgue setting, as follows:
$$
\begin{array}{l}
Q_{\sigma}: L^q(\Rd)\times L^p(\Rd) \to L^r(\Rdd), \\
T_\sigma:a\in L^{r'}(\Rdd)\to B\big(L^p(\Rd),L^{q'}(\Rd)\big),
\end{array}
$$
where $1< q <\infty,\ 1< r\leq \infty$, $1\leq p\leq\infty$, and
$\frac{1}{q}+\frac{1}{q'} = \frac{1}{r} +\frac{1}{r'} = 1$.
For simplicity we write $\Wig{}{}(f)$ and $Q_\sigma(f)$ when $f=g$.

\noindent More generally, if $\sigma\in \mathcal S'(\Rdd)$, formula
(\ref{QT}) defines a continuous linear map $T_\sigma^a: \mathcal
S(\Rd)\to \mathcal S'(\Rd)$,  and actually it establishes a
bijection between operators and sesquilinear forms, we refer to
\cite{BogDedOli2009} for details and general functional settings.
The operators $T_\sigma^a$, obtained by \eqref{QT} in correspondence
with representations $Q_\sigma$ in the Cohen class, will be called
{\it Cohen operators}. Referring to \eqref{QT}, we
actually remark that $(T_{\sigma}^{a}f,g) = (a,
Q_{\sigma}(g,f))=((a*\overline{\widetilde{\sigma}}, \Wig(g,f)))$,
therefore, viewed as operators independently of quantization rules,
all Cohen operators are Weyl operators (cfr. equation \eqref{Weylop}
and Proposition \ref{basic} (a)).

Due however to the freedom in the choice of the Cohen kernel
$\sigma$, we recapture by \eqref{QT} all types of quantizations used
in pseudo-differential calculus (Weyl, Kohn-Nirenberg,
localization, etc.). A particular family of operators of this kind
is considered in \cite{Bay11}, see Remark \ref{improve}.

When the symbol $a$ is the characteristic function of a measurable set in
$\Rdd$ it is natural to look at Cohen operators as a generalized way
of expressing the concentration of energy. In this spirit we shall
consider couples of these operators applied to a signal $f$ as the
substitute for the couple $(f,\widehat f)$ in the formulations of
the UP of Donoho-Stark type in Sections 3 and 5. More precisely in
Section 3 we shall consider the particular case of {\it localization
operators}, see \eqref{locop}, correcting a flaw in the estimate of
a Donoho-Stark type UP appearing in \cite{BogCarOli2016}, whereas in
Section 5 a similar UP in the general case of Cohen operators is
presented. Sections 2 and 4 are dedicated to some $L^p-$boundedness
results for Wigner (and Gabor) transforms and for general Cohen
class operators respectively, which are preliminary to the results
of the corresponding following sections.

Although a vast literature is available on $L^p-$boundedness, the norm estimates of Section 2
improve existing results as found in \cite{BoW} and \cite{Wong2004}, and those in Section 4 furnish extensions of results for Weyl operators to Cohen operators.

Concerning the meaning of ``concentration'' and ``not too
small'' sets we refer to the classical Donoho-Stark UP which we
recall next (see e.g. \cite{Gro01-1}, Thm. 2.3.1).
\begin{defn}\label{epscon}
Given $\varepsilon\geq 0$, a function $f\in L^{2}(\Rd)$ is
\emph{$\varepsilon$-concentrated} on a measurable set $U\subseteq
\Rd$ if
\[
\Big(\int_{\Rd\backslash
U}{|f(x)|^{2}dx}\Big)^{1/2}\leq\varepsilon\|f\|_{2}.
\]
\end{defn}

\begin{thm}[Donoho-Stark]\label{DS}
Suppose that $f\in L^{2}(\Rd)$, $f\neq 0$, is
$\varepsilon_{T}$-concentrated on $T\subseteq\Rd$, and $\widehat{f}$
is $\varepsilon_{\Omega}$-concentrated on $\Omega\subseteq\Rd$, with
$T,\Omega$ measurable sets in $\Rd$ and
$\varepsilon_T,\varepsilon_\Omega\geq 0$,
$\varepsilon_T+\varepsilon_\Omega \leq 1$. Then

\begin{equation}\label{DSineq}
|T||\Omega|\geq(1-\varepsilon_T - \varepsilon_{\Omega})^{2}.
\end{equation}
\end{thm}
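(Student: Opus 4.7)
The plan is to reformulate the two concentration hypotheses as ``small remainder'' statements for the time-cutoff $P_T f = \chi_T f$ and the frequency-cutoff $Q_\Omega f = \mathcal{F}^{-1}(\chi_\Omega \widehat{f})$, and then to estimate the operator norm of the composition $Q_\Omega P_T$ by a Hilbert--Schmidt argument. Both $P_T$ and $Q_\Omega$ are orthogonal projections on $L^2(\Rd)$, hence contractions. Definition \ref{epscon} gives directly $\|f-P_T f\|_2 \leq \varepsilon_T \|f\|_2$, while applying Plancherel to the analogous inequality for $\widehat{f}$ on $\Rd\setminus \Omega$ yields $\|f-Q_\Omega f\|_2 \leq \varepsilon_\Omega \|f\|_2$.

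Next I would combine these two estimates. By the triangle inequality and the contractivity of $Q_\Omega$,
\[
\|f - Q_\Omega P_T f\|_2 \;\leq\; \|f - Q_\Omega f\|_2 + \|Q_\Omega(f - P_T f)\|_2 \;\leq\; (\varepsilon_T+\varepsilon_\Omega)\|f\|_2.
\]
Since $\varepsilon_T+\varepsilon_\Omega\le 1$, the reverse triangle inequality gives
\[
\|Q_\Omega P_T f\|_2 \;\geq\; (1-\varepsilon_T-\varepsilon_\Omega)\|f\|_2.
\]

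The main analytic step is to show $\|Q_\Omega P_T\|_{\mathrm{op}} \leq (|T||\Omega|)^{1/2}$ (the inequality is vacuous if either set has infinite measure, so one may assume both are finite). Writing $Q_\Omega P_T$ as an integral operator,
\[
(Q_\Omega P_T f)(x) \;=\; \int_{\Rd} K(x,y) f(y)\, dy, \qquad K(x,y) = \chi_T(y)\int_\Omega e^{2\pi i \xi\cdot(x-y)}\, d\xi,
\]
Parseval in the $\xi$-variable gives $\int_{\Rd}|K(x,y)|^2\,dx = \chi_T(y)|\Omega|$ for a.e.\ $y$, whence $\|K\|_{L^2(\R2d)}^2 = |T||\Omega|$. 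Since the operator norm is dominated by the Hilbert--Schmidt norm, $\|Q_\Omega P_T f\|_2 \leq (|T||\Omega|)^{1/2}\|f\|_2$.

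Putting the two bounds together and cancelling $\|f\|_2 \neq 0$ yields $|T||\Omega|\geq (1-\varepsilon_T-\varepsilon_\Omega)^2$, as claimed. The only non-cosmetic obstacle is the kernel computation: everything else is the carefully ordered triangle-inequality chain that ensures $P_T$ sits on the innermost (good) side of $Q_\Omega$, so that the two contractions can absorb the two defects separately.
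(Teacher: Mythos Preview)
Your argument is correct and is exactly the classical proof of the Donoho--Stark uncertainty principle (as given, e.g., in Gr\"ochenig's book, which the paper cites). Note, however, that the paper does \emph{not} supply its own proof of this statement: Theorem~\ref{DS} is only recalled in the Introduction as background, with a reference to \cite{Gro01-1}, Thm.~2.3.1.

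That said, your proof is in perfect harmony with the paper's methodology. The proofs of Theorems~\ref{opL} and~\ref{DSWigner} (the paper's own contributions) follow precisely the template you wrote down: reformulate the concentration hypotheses as $\|f - T_j f\|_2 \le \varepsilon_j\|f\|_2$ for suitable ``cutoff-type'' operators $T_1,T_2$, use contractivity of one of them together with the triangle inequality to obtain $\|T_2 T_1 f\|_2 \ge (1-\varepsilon_T-\varepsilon_\Omega)\|f\|_2$, and then bound $\|T_2 T_1\|$ from above by a direct norm estimate. The only difference is that, for the more general operators treated in the paper, the upper bound on $\|T_2 T_1\|$ comes from H\"older/Hausdorff--Young/Young inequalities with sharp constants rather than from your Hilbert--Schmidt computation; the latter is the natural (and simplest) choice in the classical case you were asked to prove.
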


As showed by Remark \ref{improve}, estimate \eqref{DSineq} is actually
improved by Theorem \ref{opL} of Section 2.

\section{$L^{p}$-continuity of the Gabor and Wigner distributions}

Sharp $L^p$-boundedness estimates for the Gabor transform (or \emph{short-time Fourier transform},
STFT)
$$
V_{g}f (x,\omega) = \int_{\Rd}{e^{-2\pi i \omega\cdot t}f(t)\overline{g(t-x)}dt}
$$
with window $g$, and applied to a signal $f$, shall be needed later on in this paper.
They are consequence of Young's inequality with optimal constants (Babenko-Beckner constants) which we recall here: if $f\in L^{p}(\Rd)$ and $g\in L^{q}(\Rd)$ and $\frac{1}{p} + \frac{1}{q} = 1 + \frac{1}{r}$, then $f\ast g\in L^{r}(\Rd)$ and \[\| f\ast g\|_{r}\leq(A_{p}A_{q}A_{r'})^{d}\|f\|_{p}\|g\|_{q},\] where $A_{p} = \left(\frac{p^{1/p}}{p'^{1/p'}}\right)^{1/2}$.

The boundedness of Gabor and Wigner transform has been widely studied, in several functional settings, cf. for example
\cite{BogDedOli2009}, \cite{CorNic2016}, \cite{Tof04-1}, \cite{Tof04-2}, \cite{Won98}. Here we focus on Legesgue spaces, for which an estimate with sharp constant can be found for the
one-dimensional case in \cite{Lieb}, whereas a more general result
but with no sharp estimate is proved in \cite{BogDedOli2009} (Proposition
3.1). We improve here the boundedness result of \cite{BogDedOli2009}
with an estimation of the constant of the type of \cite{Lieb}.

\noindent For $2\le p < +\infty$, $1\leq q < +\infty$, $p\geq \max\{q,q^\prime\}$, let us set
\begin{equation}\label{const}
H(p,q) = \left(\frac{q}{p}\right)^{d/p}
\frac{(p-q)^{(p-q)d/2pq}(qp-p-q)^{(qp-p-q)d/2pq}}{(q-1)^{(q-1)d/2q}(p-2)^{(p-2)d/2p}}.
\end{equation}
As it is easily verified, for every $q\in[1,+\infty)$, we have
$\lim_{p\to+\infty}H(p,q)=1$, therefore it is convenient to extend
\eqref{const} by setting $H(q,+\infty)=1$, and also
$H(+\infty,+\infty)=1$. With this agreement we have the following
result.

\begin{thm}\label{gengab}
Let $2\le p\le\infty$ and $p'\leq q \leq p$ (so that
also $p'\leq q'\leq p$). Then the Gabor transform defines
a bounded sesquilinear map
$$
V: (f,g)\in L^{q}(\Rd)\times L^{q'}(\Rd)\rightarrow V_gf \in
L^{p}(\R2d),$$
and
\begin{equation}\label{Vgcont}
\|V_{g}f\|_{p}\leq H(p,q)\|f\|_q \|g\|_{q'}.
\end{equation}
\begin{proof}
The case $p = +\infty$ is a trivial application of Young's
inequality which immediately yields the estimate $ \|V_g
f\|_{\infty}\leq \|f\|_{q}\|g\|_{q'}$. Suppose now that
$p<+\infty$. We recall that the Gabor transform can be written as
Fourier transform of a product (cf. \cite{Gro01-1}, Thm. 3.3.2),
namely $V_g f(x,\omega) =
(f\cdot\tau_{x}\overline{g})^{\widehat{}}(\omega)$, where the translation operator $\tau_x$ is defined as $\tau_x h(t):=h(t-x)$. We have the following estimation:
\begin{equation}
\begin{split}
\|V_g f\|_{p} &= \left(\int_{\Rd}{\left(\int_{\Rd}{|(f\cdot\tau_{x}\overline{g})^{\widehat{}}(\omega)|^p d\omega}\right)^{\frac{1}{p}\cdot p} dx}\right)^{1/p} \nonumber\\
&=\left(\int_{\Rd}{\|(f\cdot\tau_{x}\overline{g})^{\widehat{}}(\omega)\|_{p}^{p}dx}\right)^{1/p} \nonumber\\
&\leq \left(\int_{\Rd}{A_{p'}^{pd}\|f\cdot\tau_{x}\overline{g}\|_{p'}^{p}dx}\right)^{1/p}\nonumber\\
&= A_{p'}^{d}\left(\int_{\Rd}{\left(\int_{\Rd}{|f(y)|^{p'}|\overline{\widetilde{g}}(x-y)|^{p'}dy}\right)^{p/p'}dx}\right)^{1/p} \nonumber\\
&= A_{p'}^{d}\left(\int_{\Rd}{\left(|f|^{p'}\ast|\overline{\widetilde{g}}|^{p'}(x)\right)^{p/p'}dx}\right)^{\frac{1}{p}\frac{p'}{p'}}\nonumber\\
&= A_{p'}^{d}\||f|^{p'}\ast|\overline{\widetilde{g}}|^{p'}\|_{p/p'}^{1/p'}, \nonumber
\end{split}
\end{equation}
where $\widetilde{g}(x) = g(-x)$. Let us now apply the Young's inequality to $|f|^{p'}$ and $|\overline{\widetilde{g}}|^{p'}$ which are in $L^{\frac{q}{p'}}$ and $L^{\frac{q'}{p'}}$ respectively, and denote $s = \frac{q}{p'} = \frac{q(p-1)}{p}$, $t = \frac{q'}{p'} = \frac{q(p-1)}{p(q-1)}$, and $r = \frac{p}{p'} = p-1$. Then
\begin{equation}
\begin{split}
A_{p'}^{d}\||f|^{p'}\ast|\overline{\widetilde{g}}|^{p'}\|_{p/p'}^{1/p'} &\leq A_{p'}^{d}\left(A_{\frac{q(p-1)}{p(q-1)}}A_{\frac{q(p-1)}{p}}A_{\frac{p-1}{p-2}}\right)^{d/p'}\||f|^{p'}\|_{s}^{1/p'}\||\overline{\widetilde{g}}|^{p'}\|_{t}^{1/p'} \nonumber\\
&= H(p,q)\|f\|_{q}\|g\|_{q'},
\end{split}
\end{equation}
where we have
\begin{equation}
\begin{split}
H(p,q) &= A_{p'}^{d}\left(A_{\frac{q(p-1)}{p(q-1)}}A_{\frac{q(p-1)}{p}}A_{\frac{p-1}{p-2}}\right)^{d/p'} \nonumber\\
&=\left(\frac{q}{p}\right)^{d/p}\frac{(p-q)^{(p-q)d/2pq}(qp-p-q)^{(qp-p-q)d/2pq}}{(q-1)^{(q-1)d/2q}(p-2)^{(p-2)d/2p}}.\nonumber
\end{split}
\end{equation}
\end{proof}
\end{thm}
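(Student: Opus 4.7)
The plan is to split into the trivial case $p=\infty$ and the main case $2\le p<\infty$. For $p=\infty$, a direct application of Hölder's inequality under the integral defining $V_g f(x,\omega)$ gives $\|V_g f\|_\infty \le \|f\|_q\|g\|_{q'}$, matching the convention $H(q,\infty)=H(\infty,\infty)=1$.

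For the main case I would exploit the well-known identity
$$V_g f(x,\omega) = \bigl(f\cdot \tau_x\overline{g}\bigr)^{\widehat{\;}}(\omega),$$
so that the inner $L^p_\omega$-norm is the $L^p$ norm of a Fourier transform. Since $p\ge 2$, the sharp Hausdorff--Young inequality (Babenko--Beckner) gives $\|\widehat h\|_p \le A_{p'}^{d}\|h\|_{p'}$. Applied pointwise in $x$ and then raised to the $p$-th power under the outer integral, this yields
$$\|V_g f\|_p \le A_{p'}^d\left(\int_{\Rd}\|f\cdot\tau_x\overline g\|_{p'}^p\,dx\right)^{1/p}.$$
The key observation is that $\|f\cdot\tau_x\overline g\|_{p'}^{p'} = \bigl(|f|^{p'}\ast|\widetilde{g}|^{p'}\bigr)(x)$, so the remaining expression becomes $A_{p'}^d\bigl\| |f|^{p'}\ast|\widetilde{g}|^{p'}\bigr\|_{p/p'}^{1/p'}$.

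I would then apply sharp Young with exponents $s=q/p'$, $t=q'/p'$, $r=p/p'$. The hypothesis $p'\le q\le p$ is exactly what guarantees $s,t\ge 1$, and a short check shows $1/s+1/t=p'=1+p'/p=1+1/r$, so the exponents are admissible. The sharp constant then reads $\bigl(A_{s}A_{t}A_{r'}\bigr)^{d/p'}\||f|^{p'}\|_s^{1/p'}\||\widetilde g|^{p'}\|_t^{1/p'} = (A_{s}A_{t}A_{r'})^{d/p'}\|f\|_q\|g\|_{q'}$.

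The rest is a purely algebraic simplification: collect the factor $A_{p'}^d$ together with $\bigl(A_{q(p-1)/(p(q-1))}\,A_{q(p-1)/p}\,A_{(p-1)/(p-2)}\bigr)^{d/p'}$ and verify that the product equals the stated $H(p,q)$. The only real obstacle is the bookkeeping in this last step: carefully substituting the Babenko--Beckner formula $A_p=(p^{1/p}/p'^{1/p'})^{1/2}$ for each of the four constants, matching the exponents on the numerator and denominator, and collecting the resulting powers of $q$, $p-q$, $qp-p-q$, $q-1$ and $p-2$ to recover formula \eqref{const}. No analytic difficulty appears beyond this, since the admissibility of the Young exponents and the two sharp inequalities do all the analytic work.
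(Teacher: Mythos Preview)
Your proposal is correct and follows essentially the same route as the paper: Hausdorff--Young with Babenko--Beckner constant on the inner $\omega$-integral, the identification $\|f\cdot\tau_x\overline g\|_{p'}^{p'}=|f|^{p'}\ast|\widetilde g|^{p'}(x)$, and then sharp Young with exponents $s=q/p'$, $t=q'/p'$, $r=p/p'$, followed by the algebraic reduction to $H(p,q)$. Your explicit verification that $p'\le q\le p$ forces $s,t\ge1$ and that $1/s+1/t=1+1/r$ is a useful addition that the paper leaves implicit.
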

\begin{rem}
For $q=2$, we have $H(p,2) = \left(\frac{2}{p}\right)^{d/p}$,
which is the constant appearing in \cite{Gro01-1} (Sec.~3.3). In
the case $q=p$ the constant becomes $H(p,q) = H(p) =
\left(\frac{p^{(p-2)}}{(p-1)^{p-1}}\right)^{d/2p} =
\left(\frac{p'^{1/p'}}{p^{1/p}}\right)^{d/2}$, which is the
Babenko-Beckner constant $A_{p'}$.
\end{rem}

\noindent In view of the well-known formula
\begin{equation}\label{WigGab}
\Wig{}{}(f,g)(x,\omega)
= 2^d e^{4\pi i x\cdot\omega} V_{\widetilde{g}}f(2x,2\omega)
\end{equation}
(see for instance \cite{Gro01-1}, Lemma 4.3.1), we
have $\|\Wig{}{}(f,g)\|_{p} =
2^{\frac{p-2}{p}d}\|V_{\widetilde{g}}f\|_p$ and any $L^p-$
boundedness result for the Gabor transform automatically transfers to
a corresponding result for the Wigner transform. More explicitly:

\begin{prop}\label{wigp}
For $2\le p \le +\infty$ and $p'\leq q\leq p$, the Wigner
transform is a bounded map
$$
\Wig{}{}: L^{q}(\Rd)\times L^{q'}(\Rd)\rightarrow L^{p}(\R2d)
$$
and
\[
\|\Wig{}{}(f,g)\|_{p}\leq C(p,q)\|f\|_{q}\|g\|_{q'},
\]
where $C(p,q) = 2^{\frac{p-2}{p}d}H(p,q)$, and $H(p,q)$ is defined by \eqref{const}.
\end{prop}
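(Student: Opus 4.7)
The plan is to reduce the statement directly to Theorem \ref{gengab} via the identity \eqref{WigGab}, which expresses the Wigner transform pointwise as a unimodular phase times a dilation of the Gabor transform with window $\widetilde g$. Since $|e^{4\pi i x\cdot\omega}|=1$, taking $L^p$ norms on both sides of \eqref{WigGab} for $2\le p<\infty$ yields
$$\|\Wig(f,g)\|_p = 2^{d}\left(\int_{\R2d}|V_{\widetilde g}f(2x,2\omega)|^p\,dx\,d\omega\right)^{1/p}.$$

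Next I would perform the change of variables $(x',\omega')=(2x,2\omega)$ in $\R2d$, which produces a Jacobian factor $2^{-2d}$ inside the integral, and hence an overall factor $2^{-2d/p}$ after taking the $p$-th root. Combining with the leading $2^{d}$ from \eqref{WigGab} gives $\|\Wig(f,g)\|_p = 2^{d-2d/p}\|V_{\widetilde g}f\|_p = 2^{(p-2)d/p}\|V_{\widetilde g}f\|_p$, exactly as claimed in the excerpt preceding the proposition. The case $p=+\infty$ is immediate from \eqref{WigGab} without any change of variables, and matches the agreement $H(+\infty,q)=1$.

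Finally, I would invoke Theorem \ref{gengab}, whose hypotheses $2\le p\le\infty$ and $p'\le q\le p$ coincide with those in the proposition, applied with window $\widetilde g$. Since $\|\widetilde g\|_{q'}=\|g\|_{q'}$, this gives $\|V_{\widetilde g}f\|_p\le H(p,q)\|f\|_q\|g\|_{q'}$, and multiplying by the dilation factor produces the announced constant $C(p,q)=2^{(p-2)d/p}H(p,q)$. There is essentially no obstacle here: the only bookkeeping is the Jacobian of the $2$-dilation, which absorbs the factor $2^d$ of \eqref{WigGab} into the correct power of $2$, and the invariance of the $L^{q'}$ norm under the reflection $g\mapsto\widetilde g$.
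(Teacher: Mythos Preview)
Your proposal is correct and follows exactly the approach the paper takes: the paper does not give a standalone proof of Proposition~\ref{wigp} but simply invokes \eqref{WigGab}, records the norm identity $\|\Wig(f,g)\|_p = 2^{\frac{p-2}{p}d}\|V_{\widetilde g}f\|_p$, and remarks that Theorem~\ref{gengab} then transfers immediately.
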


\begin{rem}
In the particular case $q = p$, Proposition \ref{wigp} reads
$\Wig{}{}(f,g)\|_{p}\leq 2^{-d}4^{d/p}A_{p}^{d}\|f\|_{p}\|g\|_{p'},
$ a result which appears in Wong \cite{Wong2004}.

We also recall that the boundedness result of Proposition \ref{wigp}
without estimation of the boundedness constant appears in
\cite{BogDedOli2009}, Prop 3.1. From Prop. 3.2 of the same paper
\cite{BogDedOli2009} one can further deduce that in the remaining
cases for $q$ and $p$ we do not have boundedness. We can therefore
summarize the situation as follows.
\end{rem}

\begin{prop}\label{qq'p}
Let $p,q\in[1,\infty]$, then
$$
\Wig{}{}: L^{q}(\Rd)\times L^{q'}(\Rd)\rightarrow L^{p}(\R2d)
$$
is bounded if and only if $p'\le q \le p$. (Remark that this means
no boundedness for $p<2$).
\end{prop}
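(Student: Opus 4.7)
The \emph{if} direction is provided by Proposition \ref{wigp}, together with the arithmetic fact that $p'\le q\le p$ forces $p\ge p'$ and hence $p\ge 2$. For the \emph{only if} direction I assume $\|\Wig(f,g)\|_p\le C\|f\|_q\|g\|_{q'}$ for all admissible $f,g$ and must derive $p'\le q\le p$ (the condition $p\ge 2$ then follows automatically).

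My plan is to test the hypothetical bound on the two-parameter family of Gaussians $f_a(x)=e^{-\pi a|x|^2}$ and $g_b(x)=e^{-\pi b|x|^2}$, $a,b>0$, since all three relevant norms are then computable in closed form. Direct Gaussian integration gives $\|f_a\|_q=(qa)^{-d/(2q)}$ and $\|g_b\|_{q'}=(q'b)^{-d/(2q')}$, and a standard completion of the square inside the defining integral \eqref{wig} yields
\[
|\Wig(f_a,g_b)(x,\omega)|=\frac{2^d}{(a+b)^{d/2}}\exp\!\Bigl(-\tfrac{4\pi ab}{a+b}|x|^2-\tfrac{4\pi}{a+b}|\omega|^2\Bigr),
\]
whence another Gaussian integration produces $\|\Wig(f_a,g_b)\|_p=c_{p,d}(a+b)^{d/p-d/2}(ab)^{-d/(2p)}$ for an explicit constant $c_{p,d}$.

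Substituting these three norms into the putative inequality and isolating the $a,b$-dependence reduces it to
\[
(a+b)^{d/p-d/2}\,a^{d/(2q)-d/(2p)}\,b^{d/(2q')-d/(2p)}\le C'
\]
uniformly in $a,b>0$. With $a$ fixed, letting $b\to 0^+$ forces the exponent of $b$ to be nonnegative, i.e.\ $q'\le p$, equivalently $q\ge p'$; letting $b\to+\infty$ forces $d/p-d/2+d/(2q')-d/(2p)\le 0$, equivalently $q\le p$. Together these give $p'\le q\le p$, and in particular $p\ge p'\ge 2$, completing the necessity.

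The only real work lies in the Gaussian Wigner computation; everything after it reduces to two one-variable asymptotic comparisons, so the main obstacle is essentially bookkeeping. A reassuring sanity check is the equality case $a=b$, corresponding to $q=q'=2$: the two sides then coincide up to a constant, consistent with Moyal's identity $\|\Wig(f,f)\|_2=\|f\|_2^2$.
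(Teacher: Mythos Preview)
Your argument is correct. The sufficiency is exactly Proposition~\ref{wigp}, and for the necessity your two-parameter Gaussian test is a clean and self-contained device: the closed-form computations of $\|f_a\|_q$, $\|g_b\|_{q'}$ and $|\Wig(f_a,g_b)|$ are all accurate, and the two limits $b\to 0^+$ and $b\to+\infty$ (with $a$ fixed) indeed force $q'\le p$ and $q\le p$ respectively. One cosmetic slip: in your last sentence ``$p\ge p'\ge 2$'' should read ``$p\ge p'$, hence $p\ge 2$'' (when $p\ge 2$ one has $p'\le 2$, not $p'\ge 2$).

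The paper itself does not prove the necessity here but refers to \cite{BogDedOli2009}, Prop.~3.2, so your route is genuinely different and in fact more self-contained. The approach in \cite{BogDedOli2009} is in the same spirit but uses a \emph{one}-parameter dilation family (essentially the argument of Proposition~\ref{noncont} below) together with the symmetry $\Wig(f,g)=\overline{\Wig(g,f)}$ and an interpolation step, as in Proposition~\ref{noncont-interp}. Your two-parameter family collapses those two steps into a single direct computation: by decoupling the scales of $f$ and $g$ you can probe the $q$- and $q'$-constraints independently, without invoking interpolation. The trade-off is that the paper's route immediately gives the stronger diagonal statement (no boundedness $L^q\times L^q\to L^p$ for $q\ne 2$), which your Gaussian test does not address directly, though it is not needed for the present proposition.
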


Motivated by Proposition \ref{qq'p}, it is natural to investigate
the cases
$$
\Wig{}{}: L^{r}(\Rd)\times L^{s}(\Rd)\rightarrow L^{p}(\R2d)
$$
where not necessarily $r,s$ are conjugate indices and, along these
lines, we present next a discussion that will yield to a complete
characterization of the cases of boundedness of the Wigner transform
on Lebesgue spaces, which, although not strictly needed in the
following sections, in our opinion has an interest in itself. The
same results, with suitably adapted constants, hold for the Gabor
transform.

We start by the case of non-boundedness on the diagonal of the
indices space.

\begin{prop}\label{noncont}
Let $p\in [1,\infty]$, then the map $\Wig{}{}: L^{q}(\Rd)\times
L^{q}(\Rd)\rightarrow L^{p}(\R2d)$ is not continuous if $q\neq 2$.
\begin{proof}
Consider a function $f\in \mathcal{S}(\Rd)$ and let $f_{\lambda}(x)=f(\lambda x)$ be the corresponding dilation by $\lambda>0$. Let $\varphi_{\lambda}(x) = \frac{f_{\lambda}(x)}{\|f_{\lambda}\|_{2}}$ be the normalization of $f_{\lambda}$, then an easy computation gives
\[
\|\varphi_{\lambda}\|_{q} = \frac{\lambda^{\left(\frac{1}{2}-\frac{1}{q}\right)d}}{\|f\|_2}\|f\|_{q}.
\]
Given $f$ as above,
\[
\Wig{}{}(\varphi_{\lambda})(x,\omega) = \frac{1}{\|f\|^{2}_{2}}\Wig{}{}(f)\left(\lambda x,\frac{\omega}{\lambda}\right),
\]
and $\Wig{}{}(\varphi_{\lambda})\in\mathcal{S}(\R2d)\subset L^{p}(\R2d)$. However, $\|\varphi_{\lambda}\|_{q}\rightarrow 0$ for $\lambda\rightarrow+\infty$ if $q<2$, whereas $\|\Wig{}{}(\varphi_{\lambda})\|_{p}$ is constant, as
\[
\|\Wig{}{}(\varphi_{\lambda})\|_{p} = \frac{\|\Wig{}{}(f)\|_{p}}{\|f\|^{2}_{2}}.
\]
Analogously, $\|\varphi_{\lambda}\|_{q}\rightarrow 0$ for $\lambda\rightarrow 0$ if $q>2$, but $\|\Wig{}{}(\varphi_{\lambda})\|_{p}$ still remain constant. Hence, we cannot have continuity of $\Wig{}{}:L^q\times L^q\to L^p$ for $q\neq 2$.
\end{proof}
\end{prop}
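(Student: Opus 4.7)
The natural strategy is a dilation (scaling) argument, which is the standard way to rule out boundedness of a bilinear map between scale-invariant Lebesgue spaces: I would produce a one-parameter family of test inputs whose $L^q$-norms can be made to vanish while the $L^p$-norm of their Wigner transforms stays bounded away from zero.

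Concretely, I would fix any nonzero $f\in\mathcal S(\Rd)$ and consider the dilates $f_\lambda(x)=f(\lambda x)$ for $\lambda>0$. A direct change of variables yields the elementary scaling $\|f_\lambda\|_q = \lambda^{-d/q}\|f\|_q$, and, by substituting $t\mapsto t/\lambda$ in \eqref{wig}, the Wigner transform scales as $\Wig(f_\lambda)(x,\omega) = \Wig(f)(\lambda x,\omega/\lambda)$; since the map $(x,\omega)\mapsto(\lambda x,\omega/\lambda)$ is measure-preserving on $\R2d$, this immediately gives $\|\Wig(f_\lambda)\|_p = \|\Wig(f)\|_p$. Normalising via $\varphi_\lambda = f_\lambda/\|f_\lambda\|_2$ (so $\|\varphi_\lambda\|_2=1$) and using $\|f_\lambda\|_2 = \lambda^{-d/2}\|f\|_2$ then produces
\[
\|\varphi_\lambda\|_q = \frac{\|f\|_q}{\|f\|_2}\,\lambda^{d(1/2-1/q)}, \qquad \|\Wig(\varphi_\lambda)\|_p = \frac{\|\Wig(f)\|_p}{\|f\|_2^{2}},
\]
the second quantity being independent of $\lambda$.

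To finish, assume by contradiction that $\Wig:L^q\times L^q\to L^p$ is continuous with constant $C$. Applied on the diagonal this gives $\|\Wig(\varphi_\lambda)\|_p \le C\|\varphi_\lambda\|_q^{2}$, i.e.\ a positive constant is bounded by $C'\lambda^{2d(1/2-1/q)}$. When $q<2$ the exponent is negative and letting $\lambda\to+\infty$ forces $\|\varphi_\lambda\|_q\to 0$; when $q>2$ the exponent is positive and letting $\lambda\to 0^+$ does the same. Either choice contradicts the fact that $\|\Wig(\varphi_\lambda)\|_p$ is a fixed positive number (note $\Wig(f)\ne 0$ since $f\ne 0$, e.g. because $\|\Wig(f)\|_2=\|f\|_2^2$ by Moyal's identity). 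This establishes non-continuity for every $q\ne 2$.

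There is no real obstacle here beyond bookkeeping: the only step that requires a small check is the Wigner scaling identity, and the rest is a matter of noting that $1/2-1/q$ vanishes precisely at the excluded value $q=2$, which is exactly where the argument must, and does, break down.
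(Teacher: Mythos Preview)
Your approach is exactly the paper's: dilate a Schwartz function, normalise in $L^2$, and exploit the fact that the symplectic dilation $(x,\omega)\mapsto(\lambda x,\omega/\lambda)$ is measure-preserving so that $\|\Wig(\varphi_\lambda)\|_p$ is independent of $\lambda$ while $\|\varphi_\lambda\|_q$ is not.

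One small slip to fix: the change of variables $t\mapsto t/\lambda$ in \eqref{wig} produces a Jacobian, so the correct scaling is
\[
\Wig(f_\lambda)(x,\omega)=\lambda^{-d}\,\Wig(f)\bigl(\lambda x,\omega/\lambda\bigr),\qquad \|\Wig(f_\lambda)\|_p=\lambda^{-d}\|\Wig(f)\|_p,
\]
not $\|\Wig(f_\lambda)\|_p=\|\Wig(f)\|_p$ as you wrote. This does not hurt the argument: after dividing by $\|f_\lambda\|_2^2=\lambda^{-d}\|f\|_2^2$ the factor $\lambda^{-d}$ cancels and you recover the (correct) constant value $\|\Wig(\varphi_\lambda)\|_p=\|\Wig(f)\|_p/\|f\|_2^2$, which is precisely the paper's formula. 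So the conclusion and the overall strategy are fine; just correct that intermediate line.
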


We extend now the non-boundedness result to the
general case.
\begin{prop}\label{noncont-interp}
Let $r,s,p\in [1,\infty]$, with $r\ne s'$, then the map $\Wig{}{}: L^{r}(\Rd)\times
L^{s}(\Rd)\rightarrow L^{p}(\R2d)$ is not continuous.
\end{prop}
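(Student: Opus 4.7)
The plan is to use a dilation/scaling argument, in the same spirit as Proposition \ref{noncont} but exploiting the full two-parameter freedom of the indices $r$ and $s$. The key point is that the Wigner transform has a precise homogeneity under simultaneous dilations of its two arguments, while the hypothesized bound $\|\Wig(f,g)\|_p\leq C\|f\|_r\|g\|_s$ is itself homogeneous with a definite exponent in $\lambda$; these two exponents match if and only if $1/r+1/s=1$, i.e.\ $r=s'$.

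More concretely, I would fix $f,g\in\mathcal S(\Rd)$ not both trivial and, for $\lambda>0$, set $f_\lambda(x)=f(\lambda x)$ and $g_\lambda(x)=g(\lambda x)$. A direct change of variables $u=\lambda t$ in the defining integral \eqref{wig} yields the identity
\[
\Wig(f_\lambda,g_\lambda)(x,\omega)=\lambda^{-d}\,\Wig(f,g)(\lambda x,\omega/\lambda),
\]
and a further change of variables $y=\lambda x$, $\eta=\omega/\lambda$ (whose Jacobian is $1$ on $\R2d$) gives $\|\Wig(f_\lambda,g_\lambda)\|_p=\lambda^{-d}\|\Wig(f,g)\|_p$. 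On the other hand the elementary scalings $\|f_\lambda\|_r=\lambda^{-d/r}\|f\|_r$ and $\|g_\lambda\|_s=\lambda^{-d/s}\|g\|_s$ hold.

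Assuming for contradiction that $\Wig:L^r(\Rd)\times L^s(\Rd)\to L^p(\R2d)$ is continuous with some constant $C>0$, plugging $f_\lambda,g_\lambda$ into the boundedness inequality and rearranging would give
\[
\|\Wig(f,g)\|_p\leq C\,\lambda^{d(1-1/r-1/s)}\|f\|_r\|g\|_s
\]
for every $\lambda>0$. The hypothesis $r\ne s'$ is precisely $1-1/r-1/s\ne 0$, so sending $\lambda\to 0^+$ (if the exponent is positive) or $\lambda\to+\infty$ (if it is negative) forces $\|\Wig(f,g)\|_p=0$ for every $f,g\in\mathcal S(\Rd)$, which is absurd.

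I do not expect any real obstacle: the computation of the two homogeneities is routine, and the previous Proposition \ref{noncont} is exactly this argument on the diagonal $r=s$ (where $r\ne s'$ reduces to $r\ne 2$). The only care needed is to verify the Jacobian computation on $\R2d$ so that the $L^p$-norm of the Wigner transform scales purely by $\lambda^{-d}$ and not by an additional power of $\lambda$.
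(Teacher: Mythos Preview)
Your argument is correct and is in fact more direct than the paper's. The paper proceeds by first observing that, from the conjugate-symmetry $\Wig(g,f)=\overline{\Wig(f,g)}$, boundedness on $L^r\times L^s$ implies boundedness on $L^s\times L^r$; it then applies bilinear complex interpolation at $\theta=1/2$ to deduce boundedness on the diagonal $L^q\times L^q$ with $\tfrac{1}{q}=\tfrac{1}{2}\bigl(\tfrac{1}{r}+\tfrac{1}{s}\bigr)$, and finally invokes Proposition~\ref{noncont} (itself a dilation argument) to reach a contradiction, since $r\ne s'$ forces $q\ne 2$. Your approach bypasses both the symmetry step and the interpolation theorem by exploiting directly the metaplectic homogeneity $\Wig(f_\lambda,g_\lambda)(x,\omega)=\lambda^{-d}\Wig(f,g)(\lambda x,\omega/\lambda)$, together with the fact that the symplectic dilation $(x,\omega)\mapsto(\lambda x,\omega/\lambda)$ has Jacobian $1$. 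This is more elementary and self-contained; the only small refinement worth making is to choose $f,g$ so that $\Wig(f,g)\not\equiv 0$ (e.g.\ $f=g$ a nonzero Gaussian), rather than merely ``not both trivial''. The paper's route, on the other hand, illustrates how the diagonal case already contains the full obstruction via interpolation.
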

\begin{proof}
Suppose, on the contrary, that there exists $p$ and $r\ne s'$ such that
$$
\Wig{}{}: L^{r}(\Rd)\times
L^{s}(\Rd)\rightarrow L^{p}(\R2d)
$$
is bounded. From the sesquilinearity of $\Wig$, we would then also have a bounded map
$$
\Wig{}{}: L^{s}(\Rd)\times
L^{r}(\Rd)\rightarrow L^{p}(\R2d).$$
Well-known interpolation theorems would therefore yield a bounded map
$$
\Wig{}{}: L^{(\frac{\theta}{r}+\frac{1-\theta}{s})^{-1}}(\Rd)\times
L^{(\frac{\theta}{s}+\frac{1-\theta}{r})^{-1}}(\Rd)\rightarrow L^{p}(\R2d)
$$
for any $\theta\in[0,1]$. For $\theta=1/2$ we would then obtain boundedness in the case
$$
\Wig{}{}: L^{q}(\Rd)\times
L^{q}(\Rd)\rightarrow L^{p}(\R2d)$$
with $\frac{1}{q}=\frac{1}{2}(\frac{1}{r}+\frac{1}{s})$, which contradicts Proposition \ref{noncont}.
\end{proof}

Proposition \ref{qq'p} together with Proposition
\ref{noncont-interp} finally yield the following proposition which
provides the complete picture of the situation, and is more
conveniently expressed in geometric terms.
\begin{prop} Let $r,s,p\in[1,\infty]$, then
$$
\Wig{}{}: L^{r}(\Rd)\times L^{s}(\Rd)\rightarrow L^{p}(\R2d)
$$
is bounded if and only if the point $(\frac{1}{r};\frac{1}{s})$ lies
on the segment $\left[(\frac{1}{p};\frac{1}{p'}),
(\frac{1}{p'};\frac{1}{p})\right]$, which should be considered empty for $p<2$, as shown by the following
picture.
\end{prop}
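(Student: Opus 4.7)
The proof would be a direct assembly of the two preceding propositions together with a short geometric translation of the stated condition, so no fundamentally new argument is needed.

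First, I would rewrite the geometric hypothesis in analytic form. The endpoints $(1/p,1/p')$ and $(1/p',1/p)$ both lie on the line $\{(x,y) : x+y=1\}$, so the segment joining them is contained in this line. Hence $(1/r,1/s)$ lies on the segment if and only if $\tfrac{1}{r}+\tfrac{1}{s}=1$ (equivalently $s=r'$) together with $\tfrac{1}{r}$ lying between $\tfrac{1}{p}$ and $\tfrac{1}{p'}$. The latter condition is equivalent to $p'\le r\le p$, which can be satisfied only when $p\ge 2$ (since $p<2$ forces $p'>2>p$). This is precisely the sense in which the segment is empty when $p<2$.

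For the "if" direction, I would simply note that, once the geometric condition is translated as $s=r'$ and $p'\le r\le p$ (with $p\ge 2$), Proposition \ref{qq'p} with $q=r$ yields the boundedness of
\[
\Wig{}{} : L^{r}(\Rd)\times L^{r'}(\Rd)\to L^{p}(\R2d).
\]

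For the "only if" direction, I would argue by contraposition. If $(1/r,1/s)$ does not lie on the segment, then either $s\ne r'$ or ($s=r'$ but $r\notin[p',p]$). In the first case, Proposition \ref{noncont-interp} directly rules out continuity. In the second case, $s=r'$ and $r\notin[p',p]$, so Proposition \ref{qq'p} (applied with $q=r$) gives non-boundedness. This exhausts all failure modes.

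The only slightly delicate point is the bookkeeping in the geometric translation, in particular handling both cases $p\ge 2$ and $p<2$ so that the statement reads correctly as a biconditional with the "empty segment" convention; but this is routine and presents no genuine obstacle.
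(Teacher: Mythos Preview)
Your proposal is correct and matches the paper's own approach: the paper states explicitly that the result is obtained by combining Proposition~\ref{qq'p} with Proposition~\ref{noncont-interp}, without writing out further details. Your analytic translation of the segment condition ($s=r'$ and $p'\le r\le p$, forcing $p\ge 2$) and the ensuing case split for the ``only if'' direction are precisely the routine bookkeeping the paper leaves implicit.
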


{\centering
\includegraphics[scale=0.35]{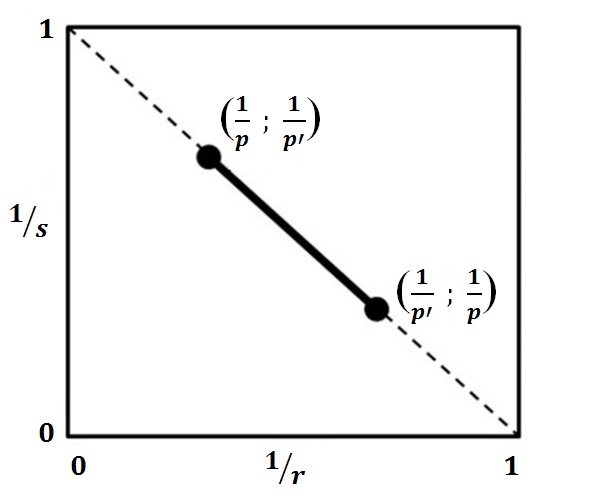}\\
}

\section{A Donoho-Stark UP for localization operators}

We revise in this section the content of Sections 2 and 3 of
\cite{BogCarOli2016} correcting a flaw in the proof of Theorem 6,
which expresses a Donoho-Stark uncertainty principle in terms of
localization operators and leads to an improvement of the classical
Donoho-Stark estimate, which now we specify correctly.

Let us first recall that a localization operator is a map of the
type:
\begin{equation}\label{locop}
f \longrightarrow L_{\phi,\psi}^{a}f =
\int_{\mathbb{R}^{{2d}}}a(x,\omega) V_{\phi}f(x,\omega)\,
\mu_\omega\tau_x\psi \, dx d\omega
\end{equation}
acting on $L^2(\Rd)$, with symbol $a\in L^q(\R2d)$, for
$q\in[1,\infty]$, and ``window'' functions $\phi,\psi\in L^2(\Rd)$.
Here $\mu_\omega\tau_x\psi(t)= e^{2\pi i \omega t}\psi(t-x)$ are {\it
time-frequency shifts} of $\psi(t)$. We refer to the available vast
literature (e.g. \cite{BoW, Coh95-1, FerGal10, Foll89,
Gro01-1, Won2002}) for the motivations and the meaning of these operators in
time-frequency and harmonic analysis, as well as for extensions to
more general functional settings. Their use in the Donoho-Stark UP
relies on the following boundedness estimate, which appears in
\cite{BogCarOli2016} (Lemma 4).
\begin{lem}\label{LemmaLocOp}
Let $\phi, \psi \in L^{2}(\Rd)$, $q\in [1,\infty]$ and consider the
quantization (see \eqref{locop}):
$$
L_{\phi,\psi}: a\in L^{q}(\mathbb{R}^{2d})\rightarrow
L_{\phi,\psi}^{a}\in B(L^2(\mathbb{R}^{d})).
$$
Then the following estimation holds
\[
\|L_{\phi,\psi}^{a}\|_{B(L^{2})}\leq
\left(\frac{1}{q'}\right)^{d/q'}\|\phi\|_{2}\|\psi\|_{2}\|a\|_{q},
\]
with $\frac{1}{q}+\frac{1}{q'}=1$, and setting
$(\frac{1}{q'})^\frac{1}{q'}=1$ for $q=1$.
\end{lem}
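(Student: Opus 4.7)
The plan is to pass to the weak (sesquilinear) formulation of $L^{a}_{\phi,\psi}$ and estimate the resulting trilinear form by a double application of H\"older combined with Theorem~\ref{gengab}. Pairing (\ref{locop}) with a test function $g\in L^{2}(\Rd)$ and applying Fubini yields
\[
(L^{a}_{\phi,\psi}f,g)=\int_{\R2d} a(x,\omega)\,V_{\phi}f(x,\omega)\,\overline{V_{\psi}g(x,\omega)}\,dx\,d\omega,
\]
since $(\mu_{\omega}\tau_{x}\psi,g)=\overline{V_{\psi}g(x,\omega)}$. Bounding this form by $(1/q')^{d/q'}\|a\|_{q}\|\phi\|_{2}\|\psi\|_{2}\|f\|_{2}\|g\|_{2}$ then yields the stated operator-norm estimate by duality in $g$.

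A first H\"older inequality with exponents $q,q'$ peels off $\|a\|_{q}$ and leaves $\|V_{\phi}f\cdot\overline{V_{\psi}g}\|_{q'}$. A second one, using $\tfrac{1}{q'}=\tfrac{1}{2q'}+\tfrac{1}{2q'}$, splits this product into $\|V_{\phi}f\|_{2q'}\|V_{\psi}g\|_{2q'}$. To each of these factors I apply Theorem~\ref{gengab} with its ``$q$-parameter'' equal to $2$; this is admissible because $2q'\geq 2$ puts the target exponent $p=2q'$ in the range $[2,\infty]$. Evaluating (\ref{const}) at $q=2$, the $(p-2)$-factors cancel and one obtains $H(p,2)=(2/p)^{d/p}$, hence $H(2q',2)=(1/q')^{d/(2q')}$. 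Multiplying the two factor-bounds then gives the combined constant $(1/q')^{d/q'}$, exactly the one appearing in the statement.

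The endpoint cases fit seamlessly into this scheme thanks to the conventions in Theorem~\ref{gengab}: for $q=1$ (so $q'=\infty$, $2q'=\infty$) the bound reduces to two applications of Cauchy--Schwarz with constant $H(\infty,2)=1$, which is consistent with the convention $(1/q')^{1/q'}=1$ at $q=1$; for $q=\infty$ (so $q'=1$, $2q'=2$) one uses Moyal's identity $\|V_{\phi}f\|_{2}=\|\phi\|_{2}\|f\|_{2}$, equivalent to $H(2,2)=1$. The only real obstacle is bookkeeping, namely the simplification of the constant $H(p,2)$, which is precisely the one already singled out in the remark following Theorem~\ref{gengab}; no further analytic difficulty arises.
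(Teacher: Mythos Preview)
Your argument is correct. The weak formulation
\[
(L^{a}_{\phi,\psi}f,g)=\int_{\R2d} a(x,\omega)\,V_{\phi}f(x,\omega)\,\overline{V_{\psi}g(x,\omega)}\,dx\,d\omega
\]
is standard, the two successive H\"older steps are legitimate, and the applicability of Theorem~\ref{gengab} with target exponent $p=2q'\in[2,\infty]$ and input exponent $2$ is verified since $(2q')'=\tfrac{2q'}{2q'-1}\le 2\le 2q'$ for every $q'\ge 1$. The evaluation $H(2q',2)=(1/q')^{d/(2q')}$ and the endpoint discussion are also correct.

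One remark: the paper does not actually supply its own proof of Lemma~\ref{LemmaLocOp}; it is quoted from \cite{BogCarOli2016} (Lemma~4). Your argument is essentially the natural one and is in fact the approach of that reference, so there is no genuine methodological difference to compare. If anything, your write-up makes explicit that the constant arises precisely from the $q=2$ case of Theorem~\ref{gengab}, tying the lemma cleanly to the sharp $L^p$ bounds proved earlier in the present paper.
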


We shall use the previous result to obtain an uncertainty principle
involving localization operators in the special case where the
symbol is the characteristic function of a set, expressing therefore
{\sl concentration} of energy on this set when applied to signals in
$L^2(\Rd)$. In this case they are also known as {\sl concentration
operators}. The proof makes use of some tools from the
pseudo-differential theory which we now recall in the $L^2$
functional framework, for more general settings and references see
e.g. \cite{BogDedOli2010}, \cite{Hor90}, \cite{Won14}.

Given $f, g\in L^{2}(\Rd)$ we can associate an operator to the Wigner transform by using relation \eqref{QT}, and we call it \emph{Weyl pseudo-differential operators}:
\begin{equation}\label{Weylop}
(W^{a}f,g)_{L^{2}(\Rd)} = (a, \Wig(g,f))_{L^{2}(\Rdd)},
\end{equation}
where $a\in L^{2}(\Rdd)$. More explicitly this is a map of the type
$$
f\in L^2(\Rd)\longrightarrow W^a f(x)= \int_{\R2d} e^{2\pi i
(x-y)\omega} a\left(\frac{x+y}{2},\omega\right) f(y)\, dy\,
d\omega\in L^2(\Rd).
$$
\noindent The fundamental connection between Weyl and localization
operators is expressed by the formula which yields localization operators in terms of Weyl operators:
\begin{equation}\label{locWeyl}
L^a_{\phi,\psi}=W^b, \ \ \ \ \ \ \ {\rm with} \ \
b=a*\Wig(\widetilde\psi,\widetilde\phi),
\end{equation}
\noindent with $\psi,\phi \in L^2(\mathbb R^d)$ and where, for a generic function $u(x)$, we use the notation $\widetilde u(x)=u(-x)$.

\noindent Of particular importance for our purpose will be the fact
that Weyl operators with symbols $a(x,\omega)$ depending only on
$x$, or only on $\omega$, are multiplication operators, or Fourier
multipliers respectively. More precisely we have

\begin{equation}\label{multop-Fmultop}
\begin{array}{ll}
a(x,\omega)=a(x) \ \ \ \Longrightarrow \ \ \ W^af(x)=a(x)f(x) \\
a(x,\omega)=a(\omega) \ \ \ \Longrightarrow \ \ \ W^af(x)= \mathcal
F^{-1}[a(\omega) \widehat f(\omega)](x).
\end{array}
\end{equation}

\noindent We fix now some notations.

\noindent Let $T\subseteq \mathbb{R}_{x}^{d}$,
$\Omega\subseteq\mathbb{R}_{\omega}^{d}$ be measurable sets, and
write for shortness $\chi_T=\chi_{T\times\mathbb{R}^d}$ and
$\chi_\Omega=\chi_{\mathbb{R}^d\times\Omega}$, in such a way that
$\chi_T=\chi_T(x)$ and $\chi_\Omega=\chi_\Omega(\omega)$.

\noindent For $\lambda>0$, \ let \ $h_{\lambda}(x) =e^{-\pi\lambda x^{2}}$, \hskip0,3cm
$\Phi_{\lambda} = h_{\lambda}/ \| h_{\lambda} \|_2$, \hskip0,3cm $\varphi_{\lambda} =
h_{\lambda}/ \| h_{\lambda}\|_1$.

\noindent Moreover, for $\lambda_1,\lambda_2>0$, let
\begin{equation}\label{op1}
L_{1}f = L_{\Phi_{\lambda_1}}^{\chi_{T}}f =
\int_{\mathbb{R}^{2d}}{\chi_{T}(x)V_{\Phi_{\lambda_1}}f(x,\omega)\mu_{\omega}\tau_{x}\Phi_{\lambda_1}dxd\omega}
\end{equation}
and
\begin{equation}\label{op2}
L_{2}f = L_{\Phi_{\lambda_2}}^{\chi_{\Omega}}f =
\int_{\mathbb{R}^{2d}}{\chi_{\Omega}(\omega)V_{\Phi_{\lambda_2}}f(x,\omega)\mu_{\omega}\tau_{x}\Phi_{\lambda_2}dxd\omega}
\end{equation}
be the two localization operators with symbols $\chi_T, \chi_\Omega$
and windows $\Phi_{\lambda_1}, \Phi_{\lambda_2}$ respectively. We can state now the main
result of this section which is an UP involving the
$\varepsilon$-concentration of these two localization operators and
is the corrected version of \cite{BogCarOli2016}, Thm. 6.

\begin{thm}\label{opL}
With the previous assumptions on $T$, $\Omega$, $L_1$, $L_2$,
suppose that $\varepsilon_{T},\varepsilon_{\Omega}>0$,
$\varepsilon_T+\varepsilon_\Omega \leq 1$, and that $f\in
L^{2}(\mathbb{R}^{d})$ is such that
\begin{equation}\label{Hp}
\|L_{1}f\|_{2}^{2}\geq(1-\varepsilon_{T}^{2})\|f\|_{2}^{2}
\quad
\text{and}\quad\|L_{2}f\|_{2}^{2}\geq(1-\varepsilon_{\Omega}^{2})\|f\|_{2}^{2}.
\end{equation}
Then
\begin{equation}\label{ours}
|T||\Omega|\geq \sup_{r\in [1,\infty)}
(1-\varepsilon_{T}-\varepsilon_{\Omega})^{2r}(2r)^{-d}\left(\frac{(r+1)^{r+1}}{(r-1)^{r-1}}\right)^{d/2}.
\end{equation}
\end{thm}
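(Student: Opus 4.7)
My plan is to bracket $\|L_1L_2\|_{\mathrm{op}}$ between a lower bound coming from the concentration hypotheses and an upper bound coming from the Schatten $S_{2r}$-ideal, then optimise over $r\in[1,\infty)$.

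First, I would make $L_1$ and $L_2$ completely explicit through the localization--Weyl identity \eqref{locWeyl} with the Gaussian windows $\phi=\psi=\Phi_{\lambda_i}$. Since $\Wig(\Phi_{\lambda_i})$ is an even $2d$-dimensional Gaussian whose marginals are $|\Phi_{\lambda_i}|^2$ in $\omega$ and $|\widehat{\Phi_{\lambda_i}}|^2$ in $x$, and since $\chi_T$ is constant in $\omega$ and $\chi_\Omega$ is constant in $x$, the $(2d)$-dimensional convolutions $\chi_T*\Wig(\Phi_{\lambda_1})$ and $\chi_\Omega*\Wig(\Phi_{\lambda_2})$ collapse to functions of a single variable, namely $b_1(x)=(\chi_T*_{\Rd}|\Phi_{\lambda_1}|^2)(x)$ and $b_2(\omega)=(\chi_\Omega*_{\Rd}|\widehat{\Phi_{\lambda_2}}|^2)(\omega)$. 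By \eqref{multop-Fmultop} the Weyl operators $L_i=W^{b_i}$ are then just multiplication by $b_1(x)$ and Fourier multiplication by $b_2(\omega)$. The key features to retain are: $0\leq b_i\leq 1$, $\int b_1=|T|$, $\int b_2=|\Omega|$, and hence the uniform bounds $\|b_1\|_{2r}^{2r}\leq|T|$, $\|b_2\|_{2r}^{2r}\leq|\Omega|$ for every $r\geq 1$.

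Each $L_i$ is self-adjoint with $0\leq L_i\leq I$, so $L_i-L_i^2=L_i(I-L_i)\geq 0$ and the concentration hypothesis $\|L_if\|_2^2\geq(1-\varepsilon_i^2)\|f\|_2^2$ gives $\|(I-L_i)f\|_2^2\leq\|f\|_2^2-\|L_if\|_2^2\leq\varepsilon_i^2\|f\|_2^2$. The standard Donoho--Stark telescoping
\[
\|f-L_1L_2f\|_2\leq\|(I-L_1)f\|_2+\|L_1\|_{\mathrm{op}}\|(I-L_2)f\|_2\leq(\varepsilon_T+\varepsilon_\Omega)\|f\|_2,
\]
using $\|L_1\|_{\mathrm{op}}=\|b_1\|_\infty\leq 1$, then gives $\|L_1L_2\|_{\mathrm{op}}\geq 1-\varepsilon_T-\varepsilon_\Omega$. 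Combined with the elementary inequality $\|L_1L_2\|_{\mathrm{op}}^{2r}\leq\|L_1L_2\|_{S_{2r}}^{2r}$ this reduces the theorem to proving the sharp Schatten estimate
\[
\|L_1L_2\|_{S_{2r}}^{2r}\leq(2r)^{d}\left(\frac{(r-1)^{r-1}}{(r+1)^{r+1}}\right)^{d/2}|T||\Omega|.
\]

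The main work is this last inequality. Using self-adjointness of $L_1,L_2$, cyclicity of the trace, and the identity $L_2^2=W^{b_2^2}$, one has $\|L_1L_2\|_{S_{2r}}^{2r}=\mathrm{tr}\bigl((L_2b_1^2L_2)^r\bigr)=\mathrm{tr}\bigl((b_1^2L_2^2)^r\bigr)$, which expands into an $r$-fold iterated integral of $r$ multiplication factors $b_1^2$ alternating with $r$ convolution kernels $\mathcal{F}^{-1}[b_2^2]$. Applying Young's inequality and Hausdorff--Young with the sharp Babenko--Beckner constants $A_p=(p^{1/p}/p'^{1/p'})^{1/2}$ at each step, combined with the uniform $L^{2r}$-bounds on $b_1,b_2$ from the first paragraph, should produce the required constant $(2r)^d((r-1)^{r-1}/(r+1)^{r+1})^{d/2}$. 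Rearranging and taking $\sup_{r\geq 1}$ then yields \eqref{ours}; the case $r=1$ corresponds to the Hilbert--Schmidt identity $\|L_1L_2\|_{\mathrm{HS}}^2=\|b_1\|_2^2\|b_2\|_2^2\leq|T||\Omega|$ and recovers the classical Donoho--Stark bound, while $r>1$ yields a strict improvement when $\varepsilon_T+\varepsilon_\Omega$ is small. The main obstacle is the careful bookkeeping of the sharp Young/Hausdorff--Young constants through the iterated Schatten computation so that the prefactor comes out exactly as stated.
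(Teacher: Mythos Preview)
Your identification of $L_1,L_2$ as the multiplication operator by $b_1=\chi_T*|\Phi_{\lambda_1}|^2$ and the Fourier multiplier by $b_2=\chi_\Omega*|\widehat{\Phi_{\lambda_2}}|^2$, together with the lower bound $\|L_1L_2\|_{\mathrm{op}}\geq 1-\varepsilon_T-\varepsilon_\Omega$ via $0\leq L_i\leq I$ and telescoping, is correct and coincides with the paper's argument.

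The gap is in your upper bound. The Schatten inequality you aim for,
\[
\|L_1L_2\|_{S_{2r}}^{2r}\leq (2r)^d\left(\frac{(r-1)^{r-1}}{(r+1)^{r+1}}\right)^{d/2}|T|\,|\Omega|,
\]
is \emph{false} for $r>1$, so no amount of bookkeeping with Young/Hausdorff--Young constants will produce it. Indeed, for large $|T|,|\Omega|$ one has $\|b_1\|_2^2\|b_2\|_2^2\sim|T||\Omega|$, and the singular values of $b_1(x)b_2(D)$ accumulate near $1$ (this is the standard behaviour of time--frequency concentration operators of prolate-spheroidal type); hence $\|L_1L_2\|_{S_{2r}}^{2r}=\sum_k s_k^{2r}\sim\sum_k s_k^{2}\sim|T||\Omega|$, which is incompatible with any constant strictly smaller than $1$. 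What \emph{is} true with that constant is the weaker operator-norm bound $\|L_1L_2\|_{\mathrm{op}}^{2r}\leq (2r)^d\bigl((r-1)^{r-1}/(r+1)^{r+1}\bigr)^{d/2}|T||\Omega|$, and since you only need the operator norm, passing through $S_{2r}$ is both unnecessary and fatal. A secondary issue is that your trace expansion into an $r$-fold iterated integral only makes sense for integer $r$, whereas the theorem requires all real $r\geq 1$.

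The paper obtains the operator-norm bound directly and elementarily, for every real $r\geq 1$, in one line: writing $L_2L_1f=\mathcal{F}^{-1}(b_2\,\widehat{b_1f})$ and applying H\"older, then sharp Hausdorff--Young, then H\"older again,
\[
\|L_2L_1f\|_2=\|b_2\,\widehat{b_1f}\|_2\leq\|b_2\|_{2r}\|\widehat{b_1f}\|_{2r'}\leq A_{(2r')'}^{\,d}\|b_2\|_{2r}\|b_1f\|_{(2r')'}\leq A_{(2r')'}^{\,d}\|b_2\|_{2r}\|b_1\|_{2r}\|f\|_2,
\]
using $\frac{1}{(2r')'}=\frac{1}{2r}+\frac{1}{2}$. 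Since $\|b_j\|_{2r}\leq\|\chi\|_{2r}$ by Young (the Gaussian factor has unit $L^1$-norm), this gives $\|L_2L_1\|_{\mathrm{op}}\leq A_{(2r')'}^{\,d}|T|^{1/2r}|\Omega|^{1/2r}$, and raising to the power $2r$ produces exactly the stated constant. No iterated integrals or Schatten machinery are needed.
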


\begin{proof}
Writing the operators $L_{j}$, $j=1,2$, defined in \eqref{op1} and
\eqref{op2} as Weyl operators we have:
\[
L_{1}f = W^{F_1}f, \quad \text{with}\quad F_1(x,\omega) =
\left(\chi_{T}(x)\otimes 1_{\omega}\right)\ast
\Wig{}{}(\Phi_{\lambda_1})(x,\omega)
\]
\[
L_{2}f = W^{F_2}f, \quad \text{with}\quad F_2(x,\omega) =
\left(1_x\otimes \chi_{\Omega}(\omega)\right)\ast
\Wig{}{}(\Phi_{\lambda_2})(x,\omega).
\]
An explicit calculation yields:
$$
\Wig(\Phi_{\lambda_j})(x,\omega) =
\varphi_{2\lambda_j}(x)\varphi_{2/\lambda_j}(\omega), \quad (j =
1,2).
$$
Therefore we have
$$
\ \ F_1(x,\omega)=(\chi_T\ast\varphi_{2\lambda_1})(x),
$$
$$
F_2(x,\omega)=(\chi_\Omega\ast\varphi_{\frac{2}{\lambda_2}})(\omega)
$$
in particular, $F_1$ depends only on $x$, and $F_2$ only on $\omega$.

It follows that
\[
L_{1}f = W^{F_1}f = F_1f,
\]
i.e. $L_1$ is the multiplication operator by the function $F_1$ and
\[
L_{2}f = W^{F_2}f = \mathcal{F}^{-1}F_2\mathcal{F}f,
\]
i.e. $L_2$ is the Fourier multiplier with symbol $F_2$. Now, for $j
= 1, 2$, we compute
\begin{equation}
\begin{split}
\|f\|_{2}^{2} &= \|(f - L_{j}f) + L_{j}f\|_{2}^{2}\\
&=((f-L_{j}f) + L_{j}f,(f - L_{j}f) + L_{j}f)\\
&=\|f- L_{j}f\|_{2}^{2} + \|L_{j}f\|_{2}^{2} + (f - L_{j}f,L_{j}f) +
(L_{j}f, f-L_{j}f). \quad \label{ripre}
\end{split}
\end{equation}
Next we show that $(f - L_{j}f,L_j f)\geq 0$. For $j = 1$ we have
\begin{equation}
\begin{split}
(f - L_{1}f, L_{1}f) &= (f,L_{1}f) - (L_{1}f,L_{1}f)\nonumber\\
&=\int{f\overline{F_1}\overline{f}}-\int{F_1f\overline{F_1}\overline{f}}\nonumber\\
&=\int{(1-F_1)\overline{F_1}|f|^{2}}\geq 0,
\end{split}
\end{equation}
as $F_1=\chi_T*\varphi_{2\lambda_1}$ is real, non negative, and
$\|F_1\|_{\infty} \le \| \chi_T\|_{\infty}
\|\varphi_{2\lambda_1}\|_1 = 1$.

Analogously, if $j = 2$ we have
\begin{equation}
\begin{split}
(f - L_{2}f, L_{2}f) &= (f,L_{2}f) - (L_{2}f,L_{2}f)\nonumber\\
&=\left(f,\mathcal{F}^{-1}F_2\mathcal{F}f\right)-\left(\mathcal{F}^{-1}F_2\mathcal{F}f,\mathcal{F}^{-1}F_2\mathcal{F}f\right)\nonumber\\
&=(\widehat{f},F_2\widehat{f}) - (F_2\widehat{f},F_2\widehat{f}) \nonumber\\
&=\int{\widehat{f}\overline{F_2\widehat{f}}} - \int{F_2\widehat{f}\overline{F_2\widehat{f}}}\nonumber\\
&=\int{(1-F_2)\overline{F_2}|\widehat{f}|^{2}}\geq 0,
\end{split}
\end{equation}
as $F_2=(\chi_\Omega\ast\varphi_{\frac{2}{\lambda_2}})$ is real, non
negative, and $\|F_2\|_{\infty} \le \| \chi_\Omega\|_{\infty}
\|\varphi_{2/\lambda_2}\|_1 = 1$.

\noindent Now, from \eqref{ripre}, since $(f-L_{j}f,
L_{j}f)\geq 0$, it follows
\[
\|f\|_2^{2} = \|f-L_{j}f\|_2^{2}+\|L_{j}f\|_2^{2} + 2(f-L_{j}f,
L_{j}f)
\]
and hence
\begin{equation}\label{estLj}
\|f-L_{j}f\|_2^{2} \leq
\|f\|_2^{2}-\|L_{j}f\|_2^{2}.
\end{equation}

From the hypothesis and \eqref{estLj} we obtain
\begin{equation*}
\left\{
  \begin{array}{ll}
    \|f-L_{1}f\|_2^{2}\leq \varepsilon_{T}^{2}\|f\|_2^{2}&,\\
    \|f-L_{2}f\|_2^{2}\leq
\varepsilon_{\Omega}^{2}\|f\|_2^{2}&.
  \end{array}
\right.
\end{equation*}
Considering the composition of $L_{1}$ and $L_{2}$ we have
\begin{equation}
\begin{split}
\|f-L_{2}L_{1}f\|_{2}&\leq\|f-L_{2}f\|_{2}+\|L_{2}f-L_{2}L_{1}f\|_{2}\nonumber\\
&\leq\varepsilon_{\Omega}\|f\|_{2} + \|L_{2}\|\|f-L_{1}f\|_{2}\nonumber\\
&\leq\varepsilon_{\Omega}\|f\|_{2}+ 1\cdot\varepsilon_{T}\|f\|_{2}\nonumber\\
&= (\varepsilon_{\Omega}+ \varepsilon_{T})\|f\|_{2},
\end{split}
\end{equation}
where the estimation of the operator norm
$\|L_2\|_{B(L^{2})}\leq\|\Phi_{\lambda_2}\|_{2}^{2}\|\chi_{\Omega}\|_{\infty}
= 1$ is obtained applying Lemma \ref{LemmaLocOp} with $q = \infty$,
or directly. Then

\begin{equation}\label{L2L1}
\begin{array}{ll}
\|L_{2}L_{1}f\|_2&\geq\|f\|_{2} - \|f-L_{2}L_{1}f\|_{2}\\
&\geq\|f\|_{2} - (\varepsilon_{\Omega}+ \varepsilon_{T})\|f\|_{2}\\
&=(1-\varepsilon_{T}-\varepsilon_{\Omega})\|f\|_{2},
\end{array}
\end{equation}

\noindent We look now for an upper estimate of $\|L_2L_1f\|$. For $r,k\in[1,+\infty)$ we have:
\begin{equation}
\begin{split}
\| L_2 L_{1}f\|_{2}&=  \|F_2\cdot\widehat{L_{1}f}\|_2\le \|F_2\|_{2r}\|\widehat{L_{1}f}\|_{2r'} \nonumber\\
& \le A_{(2r')'}^{d}\|F_{2}\|_{2r}\|L_{1}f\|_{(2r')'}
\nonumber\\
& \le A_{(2r')'}^{d}\left\|\left(\chi_{\Omega}\ast
\varphi_{2/\lambda_2}
\right)\right\|_{2r}\|\chi_T*\varphi_{2\lambda_1}\|_{(2r')'k}\|f\|_{(2r')'k'}
\nonumber\\
& \le A_{(2r')'}^{d} \|\chi_{\Omega}\|_{2r}\left\|\varphi_{2/\lambda_2}\right\|_{1}\|\chi_T\|_{(2r')'k}\left\|\varphi_{2\lambda_1}\right\|_1\|f\|_{(2r')'k'},\nonumber\\
\end{split}
\end{equation}
where $A_p$ is defined at the beginning of Section 2.

But $\|\varphi_{2/\lambda_2}\|_{1}=\|\varphi_{2\lambda_1}\|_1=1$
and, choosing $k=r+1$ so that $(2r^\prime)^\prime k^\prime=2$ and
$(2r^\prime)^\prime k=2r$, we have

$$\| L_2 L_{1}f\|_{2} \le A_{(2r')'}^d \|\chi_{\Omega}\|_{2r}
\|\chi_{T}\|_{2r} \|f\|_{2}.
$$

A direct computation of the Babenko constant yields
\begin{equation*}
A_{(2r')'} =
\left(\frac{(2r)^{1/r}(r-1)^{(r-1)/2r}}{(r+1)^{(r+1)/2r}}\right)^{1/2},
\end{equation*}
and therefore we obtain
\begin{equation}\label{new}
\begin{split}
\|L_2 L_{1}f\|_{2} &\leq \left(\frac{(2r)^{1/r}(r-1)^{(r-1)/2r}}{(r+1)^{(r+1)/2r}}\right)^{d/2}
\|f\|_{2}\|\chi_{\Omega}\|_{2r}\|\chi_{T}\|_{2r} \\
&= \left(\frac{(2r)^{1/r}(r-1)^{(r-1)/2r}}{(r+1)^{(r+1)/2r}}\right)^{d/2}\|f\|_{2}|\Omega|^{1/2r}|T|^{1/2r}.
\end{split}
\end{equation}
Finally from \eqref{L2L1} and \eqref{new} we obtain that, for every
$r\in [1,+\infty)$

\begin{equation}
\begin{split}
1-\varepsilon_{\Omega} - \varepsilon_{T}&\leq\frac{\|L_{1}L_{2}f\|_{2}}{\|f\|_{2}}\nonumber\\
&\leq\left(\frac{(2r)^{1/r}(r-1)^{(r-1)/2r}}{(r+1)^{(r+1)/2r}}\right)^{d/2}|\Omega|^{1/2r}|T|^{1/2r},
\end{split}
\end{equation}
which yields
\begin{equation*}
|T||\Omega|\geq
\sup_{r\in[1,+\infty)}(1-\varepsilon_{T}-\varepsilon_{\Omega})^{2r}(2r)^{-d}\left(\frac{(r+1)^{r+1}}{(r-1)^{r-1}}\right)^{d/2}.
\end{equation*}
\end{proof}

\begin{rem}\label{improve}

(1) The result involves the couple $(L_1f, L_2f)$ and the rectangle
$T\times\Omega$ analogously to the Donoho-Stark UP which involves
the couple $(f,\widehat f)$ and the same rectangle.

\noindent (2) Similarly to Lieb UP, but unlike Donoho-Stark UP, the
estimate is dependent on the dimension $d$ (and improves by
increasing $d$).

\noindent (3) The estimate $|T||\Omega|\ge \sup_{r\in [1,\infty)}
(1-\varepsilon_T-\varepsilon_\Omega)^{2r}(2r)^{-d}\left(\frac{(r+1)^{r+1}}{(r-1)^{r-1}}\right)^{d/2}$
is stronger then the classical Donoho-Stark estimate. Indeed, for
any choice of $\varepsilon_T$, $\varepsilon_\Omega$, the inequality $(1-\varepsilon_T-\varepsilon_\Omega)^{2r}\frac{1}{(2r)^{d}}\left(\frac{(r+1)^{r+1}}{(r-1)^{r-1}}\right)^{d/2}
> (1-\varepsilon_T-\varepsilon_\Omega)^{2}$ \ \ can be rewritten as
\noindent $ (1-\varepsilon_T-\varepsilon_\Omega)>
(2r)^{\frac{d}{2(r-1)}}\left(\frac{(r-1)}{(r+1)^{\frac{r+1}{r-1}}}\right)^{\frac{d}{4}},$
whose right-hand side vanishes as $r\to 1^+$. For example if
$\varepsilon_T+ \varepsilon_\Omega=0.1$, from \eqref{DSineq} we get
$|\Omega||T|\ge 0.81$ independently from the dimension $d$, but from \eqref{ours} we
have $|\Omega||T|\ge 0.9138$ for $r=1.34$, $d=1$, and $|\Omega||T|\ge 1.1358$ for $r=1.6$, $d=2$, etc.

\noindent  However from Theorem \ref{opL} we can not directly affirm
that we have an improvement of the Donoho-Stark estimate because our
hypotheses are different. The fact that we actually have an improvement is shown
in \cite{BogCarOli2016}, Prop. 10.

\noindent (4) Fixing $r=1$ we have
$(1-\varepsilon_1-\varepsilon_2)^{2r}\frac{1}{(2r)^{d}}\left(\frac{(r+1)^{r+1}}{(r-1)^{r-1}}\right)^{d/2}
= (1-\varepsilon_1-\varepsilon_2)^{2}$ i.e. the lower bound given by
Theorem \ref{opL} and that of Donoho-Stark UP coincide.

\noindent (5) The hypotheses of Theorem \ref{opL} depend on the
existence of two parameters $\lambda_1, \lambda_2>0$, which however
do not appear in the conclusion. This is due to the fact that the window
functions $\Phi_{\lambda_1},\Phi_{\lambda_2}$ are $L^2-$ normalized so that the norm of
the composition $L_1 L_2$ does not depend on these parameters.

\noindent (6) An open question: In the Donoho-Stark UP the case
$\varepsilon_T=\varepsilon_\Omega=0$ is equivalent to $\supp f\subseteq
T, \ \supp \widehat f\subset \Omega$ and yields $|T||\Omega|\ge 1$,
which is trivial because actually from the Benedicks UP we have
$|T||\Omega|=+\infty$.

\noindent The corresponding case
$\varepsilon_T=\varepsilon_\Omega=0$, for Theorem \ref{opL}, yields
$|T||\Omega|\ge
\sup_{r\in[1,\infty)}\frac{1}{(2r)^{d}}\left(\frac{(r+1)^{r+1}}{(r-1)^{r-1}}\right)^{d/2}
= (e/2)^d \approx (1.36)^d $. Is this a meaningful estimate or
also in this case actually $|T||\Omega|=+\infty$?
\end{rem}

\section{Cohen operators: quantizations and boundedness}
As mentioned in the Introduction, for any given time-frequency
representation $Q_\sigma =\sigma*\Wig$ we can consider, by formula \eqref{QT}, the operator
$T^a_\sigma$ depending on the symbol $a$. The class of operators that we obtain contains
classical pseudodifferential operators, localization, Weyl and $\tau$-Weyl operators, see \cite{BogDedOli2009}, as well as many other kind of operators of pseudodifferential type, such as the ones associated with the Born-Jordan representation, see \cite{CorGosNic2016}, and the pseudo-differential operators considered in \cite{Bay11}.

The aim of this section is to introduce some basic facts about the
correspondence $a\to T_\sigma^a$ and establish a Lebesgue functional
setting where these operators act continuously.

The following proposition summarizes the relations of the operators $T^a_\sigma$ with Weyl
operators, Schwartz kernels and adjoints. For simplicity we suppose signals, symbols and Cohen kernels in the Schwartz spaces, letting to the reader the standard extensions to more general settings.

\begin{prop}\label{basic}
Let $f,g \in \mathcal{S}(\Rd)$, $\sigma\in\mathcal{S}(\R2d)$, $a\in \mathcal{S}(\R2d)$. Then
\begin{equation*}
\begin{array}{ll}
a)&
T_{\sigma}^{a} = W^{a\ast\overline{\widetilde{\sigma}}},
\hbox{\ \ where $W^{a\ast\overline{\widetilde{\sigma}}}$ is the Weyl operator with symbol $a\ast\overline{\widetilde{\sigma}}$}\\
&\hbox{(and $\widetilde\sigma(z)=\sigma(-z)$, $z\in\Rdd$).}
\\
\\
b)&
(T_{\sigma}^{a})^{\ast} = T_{\overline{\sigma}}^{\overline{a}}, \ \ \hbox{i.e. the adjoint of a Cohen operator is the Cohen}\\
&\hbox{ operator corresponding to conjugated sesquilinear form and symbol.}
\\
\\
c)&
(T_{\sigma}^{a}f,g) = (k,f\otimes \overline g),
\hbox{\ \ i.e. $k=A\mathcal F_2^{-1}[a*\overline{\widetilde\sigma}]$ is the Schwartz kernel}\\
&\hbox{ of the operator $T_\sigma^a$,\ where $A:\phi(x,t)\mapsto \phi\left(\frac{x+t}{2},x-t\right)$, and $\mathcal F_2$ is the}\\
&\hbox{ Fourier transform with respect to the second half of variables.}
\\
\end{array}
\end{equation*}
\end{prop}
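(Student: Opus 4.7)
The three parts are essentially routine manipulations once the definition $(T_\sigma^a f,g)=(a,\sigma\ast\Wig(g,f))$ is unpacked, but the key is to handle the convolutions and conjugations cleanly. The plan is to prove (a) first and then deduce (b) and (c) from it together with well-known facts about Weyl operators.

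For part (a), I would start from $(T_\sigma^a f,g)=(a,\sigma\ast\Wig(g,f))$ and shift the convolution onto the symbol $a$. Writing out the definitions and applying Fubini, one checks the general identity
\[
(a,\sigma\ast h)=(a\ast\overline{\widetilde\sigma},h),
\]
valid for Schwartz functions, by the substitution $z\mapsto z-w$ and recognizing that the adjoint (with respect to the $L^2$ pairing) of convolution by $\sigma$ is convolution by $\overline{\widetilde\sigma}$. Applying this with $h=\Wig(g,f)$ gives $(T_\sigma^a f,g)=(a\ast\overline{\widetilde\sigma},\Wig(g,f))$, which is exactly the definition \eqref{Weylop} of the Weyl operator $W^{a\ast\overline{\widetilde\sigma}}$. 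Since Schwartz functions are dense and sesquilinear pairings determine the operator, (a) follows.

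For part (b), the quickest route is to invoke the well-known fact that the Weyl quantization satisfies $(W^b)^\ast=W^{\overline b}$ and combine it with (a). A direct computation gives
\[
\overline{a\ast\overline{\widetilde\sigma}}=\overline a\ast\widetilde\sigma=\overline a\ast\overline{\widetilde{\overline\sigma}},
\]
(using $\overline{\widetilde{\overline\sigma}}(z)=\sigma(-z)=\widetilde\sigma(z)$), so $(T_\sigma^a)^\ast=W^{\overline a\ast\overline{\widetilde{\overline\sigma}}}=T_{\overline\sigma}^{\overline a}$ by applying (a) a second time. Alternatively one can proceed without (a) by computing $((T_\sigma^a)^\ast f,g)=\overline{(T_\sigma^a g,f)}=\overline{(a,Q_\sigma(f,g))}$ and using $Q_\sigma(f,g)=\overline{Q_{\overline\sigma}(g,f)}$, which comes from $\Wig(f,g)=\overline{\Wig(g,f)}$; this identifies the right-hand side with $(\overline a,Q_{\overline\sigma}(g,f))$, that is, with $(T_{\overline\sigma}^{\overline a}f,g)$.

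For part (c), I would again rely on (a) and use the explicit integral representation of the Weyl operator recalled just before \eqref{locWeyl},
\[
W^b f(x)=\int_{\R2d} e^{2\pi i(x-y)\cdot\omega}\,b\!\left(\tfrac{x+y}{2},\omega\right)f(y)\,dy\,d\omega.
\]
Recognizing the $\omega$-integral as an inverse Fourier transform in the second slot of $b$, the Schwartz kernel of $W^b$ is
\[
k(x,y)=\mathcal F_2^{-1}[b]\!\left(\tfrac{x+y}{2},x-y\right)=A\mathcal F_2^{-1}[b](x,y).
\]
Taking $b=a\ast\overline{\widetilde\sigma}$ and writing the pairing $(T_\sigma^a f,g)=\int k(x,y)f(y)\overline{g(x)}\,dx\,dy=(k,f\otimes\overline g)$ by Fubini completes the proof.

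The computations are routine; the only mildly delicate point is keeping track of the conjugations and reflections in the $\widetilde{\,\cdot\,}$ and overline operations in (b), which is why I would prefer the two-step route through (a) and the standard formula for the Weyl adjoint rather than an unassisted direct computation.
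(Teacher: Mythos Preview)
Your argument is correct and matches the paper's approach: the paper simply states ``We omit the proof which is a straightforward computation,'' and what you have written is precisely that straightforward computation, carried out cleanly via the convolution--adjoint identity for (a), the standard formula $(W^b)^\ast=W^{\overline b}$ for (b), and the explicit Weyl kernel for (c).
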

We omit the proof which is a straightforward computation.

The previous proposition leads to an interesting general view on the behavior of the different rules of association symbol-operator, improperly called ``quantizations''. By (a) all quantizations can be seen as ``deformation'' of the Weyl quantization where the symbol at first undergoes a filtering by the convolution with a fixed kernel $\sigma$. In absence of filtering, i.e. when $\sigma =\delta$, we have the ``pure'' Weyl quantization.

Many characteristics of the quantization can be then read from the Cohen kernel $\sigma$. For example it is natural to ask whether the correspondence $a\to T_\sigma^a$ is a quantization in the ``classical'' sense, i.e. it assigns self-adjoint operators to real symbols. We see from (b) that this happens if and only if the kernel $\sigma$ is itself real. The fact that the Weyl correspondence is a quantization in the classical sense corresponds to the fact that the Dirac $\delta$ is real (a distribution $u\in \mathcal S'(\Rdd)$ is ``real'' if its value on real-valued test functions is real).

Furthermore, we see from (c) that the correspondence between Schwartz kernels and operators is nothing else than formula \eqref{QT} where, as  sesquilinear form, is taken the skew-tensor product $f\otimes\overline g$, which is not in the Cohen class.

An explicit computation shows that the expression of the operator $T_{\sigma}^{a}$ acting on a function $f$ is
\begin{equation}\label{expl}
T_{\sigma}^{a} f(t) = \iint_{\R2d\times\R2d}{e^{2\pi i \xi(t-u)}a(x,\omega)\overline{\sigma\left(x-\frac{t+u}{2}, \omega-\xi\right)}f(u)\,dx\, d\omega\, du\, d\xi},
\end{equation}
in particular we remark that it is a classical pseudo-differential operator with amplitude
\[
a_{Q}(u,t,\xi) := \iint_{\R2d}{a(x,\omega)\overline{\sigma\left(x-\frac{t+u}{2}, \omega-\xi\right)}d\omega\, dx},
\]
(see \cite{Shu91}), however we shall not make use of this formula in this context.

With the following property we furnish a Lebesgue functional setting for the action of the Cohen operators.

\begin{thm}\label{contT}
Let $a\in L^{r}(\R2d)$ and $\sigma\in
L^{s}(\R2d)$. Moreover, let $q\in [1,2]$ be such that $\frac{1}{r} +
\frac{1}{s} = 1 + \frac{1}{q}$, and $p\in[q,q^\prime]$. Then $T:(a,\sigma)\in
L^{r}(\R2d)\times L^s(\R2d)\rightarrow T^a_\sigma \in B(L^{p}(\Rd))$
is a continuous map, and
\begin{equation}
\|T^a_\sigma\|_{B(L^{p})}\leq (A_{r}A_{s}A_{q'})^{d}C(q',p)\|a\|_{r}\|\sigma\|_{s},
\end{equation}
where $C(q',p) = 2^{\frac{q'-2}{q'}d}H(q',p)$.
\begin{proof}
Let $f\in L^{p}(\Rd)$, and $g\in L^{p'}(\Rd)$. By \eqref{QT} we obtain the following estimations, where we use H\"older's inequality and Proposition \ref{wigp} first, Young's inequality with the Babenko-Beckner's constants after:
\begin{equation}
\begin{split}
|(T^a_\sigma f,g)| &= |(a\ast\overline{\widetilde{\sigma}},\Wig(g,f))| \nonumber\\
&\leq \|a\ast\overline{\widetilde{\sigma}}\|_{q}\|\Wig(g,f)\|_{q'}\nonumber\\
&\leq \|a\ast\overline{\widetilde{\sigma}}\|_{q}C(q',p)\|f\|_p \|g\|_{p'} \nonumber\\
&\leq (A_{r}A_{s}A_{q'})^{d}C(q',p)\|a\|_r \|\sigma\|_s \|f\|_p \|g\|_{p'}.\nonumber
\end{split}
\end{equation}
Then the continuity of the operator $T^a_\sigma$ easily follows from standard arguments.
\end{proof}

\end{thm}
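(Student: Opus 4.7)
The plan is to reduce the boundedness of $T^a_\sigma$ on $L^p(\Rd)$ to a chain of three classical inequalities via the Weyl-form identity from Proposition \ref{basic}(a), then combine them through duality. Specifically, I will estimate $|(T^a_\sigma f,g)|$ for test functions $f\in L^p(\Rd)$ and $g\in L^{p'}(\Rd)$, and conclude by $L^p$--$L^{p'}$ duality.

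The first step is to rewrite the pairing as $(T^a_\sigma f,g) = (a*\overline{\widetilde{\sigma}},\Wig(g,f))$, which is the defining formula \eqref{QT} combined with Proposition \ref{basic}(a). This splits the estimate into a symbol factor $a*\overline{\widetilde{\sigma}}$ and a Wigner factor $\Wig(g,f)$. Applying H\"older's inequality with conjugate exponents $q$ and $q'$ gives
\begin{equation*}
|(T^a_\sigma f,g)| \le \|a*\overline{\widetilde{\sigma}}\|_q\, \|\Wig(g,f)\|_{q'}.
\end{equation*}
The choice of the exponent $q$ is dictated precisely so that both pieces are controllable: $q$ is the Young exponent matching the pair $(r,s)$, while $q'\ge 2$ (because $q\in[1,2]$) is in the admissible range of Proposition \ref{wigp}.

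For the second factor I will invoke Proposition \ref{wigp} with the parameter $p$ of that proposition replaced by $q'$ and the inner exponent replaced by $p$; this is legitimate because the hypothesis $p\in[q,q']$ translates into $(q')'=q\le p\le q'$, i.e.\ the condition $p'\le q\le p$ of Proposition \ref{wigp}. This yields
\begin{equation*}
\|\Wig(g,f)\|_{q'} \le C(q',p)\,\|g\|_{p'}\|f\|_{p},
\end{equation*}
with $C(q',p)=2^{\frac{q'-2}{q'}d}H(q',p)$. For the first factor I apply Young's convolution inequality with sharp Babenko--Beckner constants to $a\in L^r$ and $\overline{\widetilde{\sigma}}\in L^s$; since $1/r+1/s=1+1/q$ the output lies in $L^q$, and
\begin{equation*}
\|a*\overline{\widetilde{\sigma}}\|_q \le (A_r A_s A_{q'})^d\,\|a\|_r\|\sigma\|_s,
\end{equation*}
using $\|\overline{\widetilde{\sigma}}\|_s=\|\sigma\|_s$.

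Multiplying the three estimates and taking the supremum over $g\in L^{p'}$ with $\|g\|_{p'}=1$ produces the desired norm bound. The only subtle point, and the potential pitfall, is verifying that the index conditions of Proposition \ref{wigp} are compatible with the hypotheses of the theorem: one must check $q'\ge 2$ (which follows from $q\le 2$) and $(q')'\le p\le q'$ (which is exactly $p\in[q,q']$). Once these verifications are made, the rest is a routine concatenation of sharp inequalities.
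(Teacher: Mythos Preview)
Your proof is correct and follows exactly the same strategy as the paper's own argument: rewrite the pairing via Proposition~\ref{basic}(a), apply H\"older with exponents $q,q'$, bound the Wigner factor by Proposition~\ref{wigp}, and bound the convolution by Young's inequality with Babenko--Beckner constants. The only addition is your explicit verification of the index compatibility $(q')'\le p\le q'$, which the paper leaves implicit.
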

\begin{rem}\label{12}
If we take $f,g \in L^{2}(\Rd)$, we can reformulate the previous result. More precisely, let $q\in[1,2]$, $a\in L^{r}(\Rd)$ and $\sigma\in L^{s}(\Rd)$. Then $T:(a,\sigma)\in L^{r}(\R2d)\times L^s(\R2d)\rightarrow T^a_\sigma \in B(L^{2}(\Rd))$, with $\frac{1}{r} + \frac{1}{s} = 1 + \frac{1}{q}$, is a continuous map, and
\begin{equation}
\|T^a_\sigma\|_{B(L^2)}\leq (A_{r}A_{s}A_{q'})^{d}\left(\frac{2^{q'-1}}{q'}\right)^{d/q'}\|a\|_{r}\|\sigma\|_{s}.
\end{equation}
\end{rem}

We also remark that similar types of operators are considered in
\cite{Bay11} where boundedness and Schatten-Von Neuman results are
obtained. However the estimates in \cite{Bay11} are not given in
terms of the Cohen kernel and the type of operators considered there
can not cover the totality of the Cohen class operators. Actually
the sesquilinear forms associated with the operators in \cite{Bay11}
enjoy the following uncertainty principle: if their support has
finite measure then at least one of the two entries is null,
(\cite{Bay11}, Thm. 1.4.3) and this is not shared by all the
sesquilinear forms in the Cohen class, see Example 5 in
\cite{BogFerGal}.

\section{Donoho-Stark UP within the Cohen Class}
In this section we give a new formulation of the classical Donoho-Stark UP in the context of the Cohen class, by using the boundedness results founded previously.

Consider the operator $T_{\sigma}^{a}$ associated with a generic time-frequency representation in the Cohen class, cf. \eqref{QT}. We have the following result, that is the analogous for Cohen operators of Theorem \ref{opL}.
\begin{thm}\label{DSWigner}
Let $T,\Omega$ be measurable sets in $\Rd$. Let $\chi_{T}$ be the function $\chi_{T\times\Rd}(x,\omega)=\chi_{T}(x)$ and $\chi_{\Omega}$ be the function $\chi_{\Rd\times\Omega}(x,\omega)=\chi_{\Omega}(\omega)$, with corresponding operators $T_{\sigma}^{\chi_T}$ and $T_{\sigma}^{\chi_\Omega}$ associated with a generic representation $Q_{\sigma}$. We suppose that the kernel $\sigma$ is such that $F_1(t) = \int_{T\times\Rd}{\overline\sigma(x-t,\omega)dxd\omega}$ and $F_2(\omega)=\int_{\Rd\times\Omega} \overline\sigma (x,\eta-\omega)\,dx\,d\eta$ are (real) non negative function satisfying $\|F_j\|_{\infty}\leq 1$, $j=1,2$. Assume that $T_\sigma^{\chi_T}$ and $T_\sigma^{\chi_\Omega}$ satisfy the $\varepsilon$-concentration condition, i.e.,
\[
\|T_{\sigma}^{\chi_T}f\|_{2}^{2}\geq (1-\varepsilon_{T}^{2})\|f\|_{2}^{2} \qquad \text{and}\qquad \|T_{\sigma}^{\chi_\Omega}f\|_{2}^{2}\geq (1-\varepsilon_{\Omega}^{2})\|f\|_{2}^{2};
\]
moreover, suppose that
\begin{equation}\label{minT}
\min\{\|T_{\sigma}^{\chi_T}\|_{B(L^2)},\|T_{\sigma}^{\chi_\Omega}\|_{B(L^2)}\}\leq 1.
\end{equation}
Define $G_1(t)=\int_{\Rd} \overline\sigma (-t,\omega)\,d\omega$ and $G_2(\xi)=\int_{\Rd} \overline\sigma (x,-\xi)\,dx$. Then for every $s_1,s_2,p_1,p_2,r\in [1,\infty]$ satisfying $1/s_j+1/p_j=1+1/2r$, $j=1,2$, we have
\begin{equation*}
\vert T\vert^{1/s_1} \vert\Omega\vert^{1/s_2}\geq (1-\varepsilon_T-\varepsilon_\Omega)\frac{1}{\left( A_{s_1}A_{s_2}A_{p_1}A_{p_2}A_{\frac{2r}{r+1}}A_{\frac{2r}{2r-1}}^2\right)^d \Vert G_1\Vert_{p_1}\Vert G_2\Vert_{p_2}}.
\end{equation*}
\end{thm}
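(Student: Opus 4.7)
The plan is to mimic the architecture of the proof of Theorem \ref{opL}, replacing the two localization operators by the two Cohen operators $T_\sigma^{\chi_T}$ and $T_\sigma^{\chi_\Omega}$, and using the hypotheses on $F_1,F_2$ as a substitute for the explicit form that the Wigner transform of a Gaussian provided before. The crucial first step is to rewrite the two Cohen operators as a multiplication operator and a Fourier multiplier respectively. By Proposition \ref{basic}(a) we have $T_\sigma^{\chi_T}=W^{\chi_T\ast\overline{\widetilde\sigma}}$. Integrating out the variable that does not appear in $\chi_T$ one computes $(\chi_T\ast\overline{\widetilde\sigma})(x,\omega)=\int \chi_T(x-y)G_1(-y)\,dy$ (the $\eta$-integral produces $G_1$ in the right form), so the symbol depends only on $x$ and in fact equals $F_1(x)=(\chi_T\ast G_1)(x)$. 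An analogous computation gives $\chi_\Omega\ast\overline{\widetilde\sigma}=F_2(\omega)=(\chi_\Omega\ast G_2)(\omega)$. By \eqref{multop-Fmultop}, $T_\sigma^{\chi_T}f=F_1\cdot f$ and $T_\sigma^{\chi_\Omega}f=\mathcal F^{-1}(F_2\widehat f)$.

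Next I would reproduce verbatim the inner-product manipulation from Theorem \ref{opL}, exploiting the hypothesis that $F_1,F_2$ are real, non-negative and $\le 1$ in $\|\cdot\|_\infty$: this yields $(f-T_\sigma^{\chi_T}f,T_\sigma^{\chi_T}f)\ge 0$ and its analogue for $\chi_\Omega$, which together with the $\varepsilon$-concentration assumption give
\[
\|f-T_\sigma^{\chi_T}f\|_2\le\varepsilon_T\|f\|_2,\qquad \|f-T_\sigma^{\chi_\Omega}f\|_2\le\varepsilon_\Omega\|f\|_2.
\]
Using the extra hypothesis \eqref{minT}, I may reorder the composition so that the operator acting last has operator-norm $\le 1$ (say $\|T_\sigma^{\chi_\Omega}\|\le 1$). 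The triangle inequality then gives $\|f-T_\sigma^{\chi_\Omega}T_\sigma^{\chi_T}f\|_2\le(\varepsilon_T+\varepsilon_\Omega)\|f\|_2$, hence the lower bound
\[
\|T_\sigma^{\chi_\Omega}T_\sigma^{\chi_T}f\|_2\ge(1-\varepsilon_T-\varepsilon_\Omega)\|f\|_2.
\]

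For the matching upper bound I chain sharp-constant inequalities. By Plancherel,
\[
\|T_\sigma^{\chi_\Omega}T_\sigma^{\chi_T}f\|_2=\|F_2\,\widehat{F_1 f}\|_2.
\]
H\"older with $\tfrac{1}{2}=\tfrac{1}{2r}+\tfrac{1}{2r'}$ gives the factor $\|F_2\|_{2r}\|\widehat{F_1f}\|_{2r'}$; Hausdorff--Young with the sharp Babenko--Beckner constant (noting $(2r')'=\tfrac{2r}{r+1}$) yields $\|\widehat{F_1f}\|_{2r'}\le A_{\frac{2r}{r+1}}^d\|F_1f\|_{2r/(r+1)}$; a further H\"older with exponents $2r$ and $2$ gives $\|F_1f\|_{2r/(r+1)}\le\|F_1\|_{2r}\|f\|_2$. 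Finally I apply Young's inequality with sharp constants to $F_j=\chi_{\cdot}\ast G_j$ with $\tfrac{1}{s_j}+\tfrac{1}{p_j}=1+\tfrac{1}{2r}$, observing $(2r)'=\tfrac{2r}{2r-1}$, to obtain
\[
\|F_j\|_{2r}\le\bigl(A_{s_j}A_{p_j}A_{\frac{2r}{2r-1}}\bigr)^{d}\,|\cdot|^{1/s_j}\|G_j\|_{p_j}.
\]
Multiplying everything and dividing by $\|f\|_2$ produces the claimed inequality.

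The main obstacle is essentially bookkeeping: matching the index conventions so that Hausdorff--Young and the two applications of Young's inequality combine into precisely the product of Babenko--Beckner constants appearing in the statement, and verifying that the convolution $\chi_T\ast\overline{\widetilde\sigma}$ really collapses to the one-variable function $F_1$ (and analogously for $F_2$) so that \eqref{multop-Fmultop} is applicable. Once that reduction is in place, the argument runs in strict parallel with Theorem \ref{opL}, with hypothesis \eqref{minT} replacing the automatic bound $\|L_2\|\le 1$ that was available there.
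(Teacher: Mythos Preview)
Your proposal is correct and follows essentially the same route as the paper's proof: the same reduction of $T_\sigma^{\chi_T}$ and $T_\sigma^{\chi_\Omega}$ to multiplication and Fourier multiplier via Proposition~\ref{basic}(a) and \eqref{multop-Fmultop}, the same positivity argument for $(f-T_\sigma^{\chi_T}f,T_\sigma^{\chi_T}f)$ using $0\le F_j\le 1$, the same composition estimate under hypothesis~\eqref{minT}, and the same chain H\"older\,$\to$\,Hausdorff--Young\,$\to$\,H\"older\,$\to$\,Young (with sharp constants) for the upper bound, including the choice $(2r')'k'=2$, $(2r')'k=2r$ and the identification $F_j=\chi_{\cdot}*G_j$. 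Your index bookkeeping $(2r')'=\tfrac{2r}{r+1}$ and $(2r)'=\tfrac{2r}{2r-1}$ matches the constants in the statement.
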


\begin{rem}
Taking $s_1=s_2:=s$ and $p_1=p_2:=p$, the conclusion of Theorem \ref{DSWigner} becomes
\begin{equation*}
\vert T\vert \vert\Omega\vert\geq \sup_{\afrac{r\in[1,\infty)}{1/s+1/p=1+1/2r}} (1-\varepsilon_T-\varepsilon_\Omega)^s \frac{1}{\left( A_s^2 A_p^2 A_{\frac{2r}{r+1}}A_{\frac{2r}{2r-1}}\right)^{sd} \Vert G_1\Vert_{p}^s\Vert G_2\Vert_{p}^s}.
\end{equation*}
\end{rem}

\begin{proof}[Proof of Theorem \ref{DSWigner}]
Observe at first that from Prop. \ref{basic} (a), and \eqref{multop-Fmultop}, for every $f\in L^2$ we have
\begin{equation}\label{chiTchiOmega}
T_\sigma^{\chi_T}f(t)=F_1(t)f(t),\quad \text{and}\quad T_\sigma^{\chi_\Omega}f(t)=\mathcal{F}^{-1}_{\omega\to t} \left[ F_2(\omega)\widehat{f}(\omega)\right](t).
\end{equation}
We can write
\begin{equation}
\begin{split}
\|f\|_{2}^{2} &= \|(f - T_{\sigma}^{\chi_T}f) + T_{\sigma}^{\chi_T}f\|_{2}^{2}\\
&=((f-T_{\sigma}^{\chi_T}f) + T_{\sigma}^{\chi_T}f,(f - T_{\sigma}^{\chi_T}f) + T_{\sigma}^{\chi_T}f)\\
&=\|f- T_{\sigma}^{\chi_T}f\|_{2}^{2} + \|T_{\sigma}^{\chi_T}f\|_{2}^{2} + (f - T_{\sigma}^{\chi_T}f,T_{\sigma}^{\chi_T}f) + (T_{\sigma}^{\chi_T}f, f-T_{\sigma}^{\chi_T}f). \quad \label{ripre1}
\end{split}
\end{equation}
We now show that $(f - T_{\sigma}^{\chi_T}f,T_{\sigma}^{\chi_T} f)\geq 0$:
\begin{equation}
\begin{split}
(f - T_{\sigma}^{\chi_T}f, T_{\sigma}^{\chi_T}f) &= (f,T_{\sigma}^{\chi_T}f) - (T_{\sigma}^{\chi_T}f,T_{\sigma}^{\chi_T}f)\nonumber\\
&=\int{f\overline{F_1}\overline{f}}-\int{F_1f\overline{F_1}\overline{f}}\nonumber\\
&=\int{(1-F_1)\overline{F_1}|f|^{2}}\geq 0,
\end{split}
\end{equation}
since $F_1$ is real, non negative, and $\|F_1\|_{\infty}\leq 1$ by hypothesis.  Then, it follows
\[
\|f\|_2^{2} = \|f-T_{\sigma}^{\chi_T}f\|_2^{2}+\|T_{\sigma}^{\chi_T}f\|_2^{2} + 2(f-T_{\sigma}^{\chi_T}f,
T_\sigma^{\chi_T} f),
\]
and hence \begin{equation}\label{estLjT} \|f-T_{\sigma}^{\chi_T}f\|_2^{2} \leq
\|f\|_2^{2}-\|T_{\sigma}^{\chi_T}f\|_2^{2}.
\end{equation}
From the hypothesis and \eqref{estLjT} we obtain
\begin{equation*}
\|f-T_{\sigma}^{\chi_T}f\|_2^{2}\leq \varepsilon_{T}^{2}\|f\|_2^{2}.
\end{equation*}
Reasoning in the same way for $T_\sigma^{\chi_\Omega}$, we get
\begin{equation*}
\|f-T_{\sigma}^{\chi_\Omega}f\|_2^{2}\leq
\varepsilon_{\Omega}^{2}\|f\|_2^{2}.
\end{equation*}
We assume for simplicity that $\Vert T_\sigma^{\chi_\Omega}\Vert_{B(L^2)}\leq 1$, cf. \eqref{minT} (in the other case the proof is similar). We then have
\begin{equation}
\begin{split}
\|f-T_{\sigma}^{\chi_\Omega}T_{\sigma}^{\chi_T}f\|_{2}&\leq\|f-T_{\sigma}^{\chi_\Omega}f\|_{2}+\|T_{\sigma}^{\chi_\Omega}f-T_{\sigma}^{\chi_\Omega}T_{\sigma}^{\chi_T}f\|_{2}\nonumber\\
&\leq\varepsilon_{\Omega}\|f\|_{2} + \|T_{\sigma}^{\chi_\Omega}\|_{B(L^2)}\|f-T_{\sigma}^{\chi_T}f\|_{2}\nonumber\\
&\leq\varepsilon_{\Omega}\|f\|_{2}+ 1\cdot\varepsilon_{T}\|f\|_{2}\nonumber\\
&= (\varepsilon_{\Omega}+ \varepsilon_{T})\|f\|_{2}.
\end{split}
\end{equation}
Then
\begin{equation}
\begin{split}
\|T_{\sigma}^{\chi_\Omega}T_{\sigma}^{\chi_T}f\|_2&\geq\|f\|_{2} - \|f-T_{\sigma}^{\chi_\Omega}T_{\sigma}^{\chi_T}f\|_{2}\nonumber\\
&\geq\|f\|_{2} - (\varepsilon_{\Omega}+ \varepsilon_{T})\|f\|_{2}\nonumber\\
&=(1-\varepsilon_{T}-\varepsilon_{\Omega})\|f\|_{2},
\end{split}
\end{equation}
and, from this, it follows that
\begin{equation}\label{opnorm}
1-\varepsilon_{\Omega} - \varepsilon_{T}\leq \|T_{\sigma}^{\chi_\Omega}T_{\sigma}^{\chi_T}\|_{B(L^2)}.
\end{equation}
From \eqref{chiTchiOmega}, and using H\"older inequality we have, for every $r,k\in [1,\infty]$
\begin{equation*}
\begin{split}
\|T_{\sigma}^{\chi_\Omega}T_{\sigma}^{\chi_T}f\|_2 &=\| F_2\widehat{F_1f}\|_2 \\
&\leq A_{(2r^\prime)^\prime}^d \| F_2\|_{2r} \| F_1 f\|_{(2r^\prime)^\prime} \\
&\leq A_{(2r^\prime)^\prime}^d \| F_2\|_{2r} \| F_1\|_{(2r^\prime)^\prime k} \| f\|_{(2r^\prime)^\prime k^\prime}.
\end{split}
\end{equation*}
We choose $k$ such that $(2r^\prime)^\prime k^\prime=2$, that implies $(2r^\prime)^\prime k=2r$. Observe now that $F_1=\chi_T*G_1$ and $F_2=\chi_\Omega*G_2$. Then, for every $s_1$, $s_2$, $p_1$, $p_2$ such that $1/s_j+1/p_j=1+1/2r$, $j=1,2$, by Young inequality we get
$$
\|T_{\sigma}^{\chi_\Omega}T_{\sigma}^{\chi_T}f\|_2 \leq \left( A_{(2r^\prime)^\prime} A_{s_1} A_{s_2} A_{p_1} A_{p_2} A_{(2r)^\prime}^2\right)^d \|\chi_T\|_{s_1} \|\chi_\Omega\|_{s_2} \| G_1\|_{p_1} \| G_2\|_{p_2} \| f\|_2,
$$
that, together with \eqref{opnorm}, implies the thesis.
\end{proof}

\vskip0.3cm

\end{document}